\documentclass[12pt]{amsart}
\usepackage{amssymb,amsmath,amsthm,comment}
\usepackage[left]{lineno}
\usepackage{blindtext}

\setlength{\textwidth}{6.5in} \setlength{\oddsidemargin}{0in}
\setlength{\evensidemargin}{0in} \setlength{\textheight}{8.5in}
\setlength{\topmargin}{0in} \setlength{\headheight}{0in}

\def\Q{\mathbb Q}

\def\Z{\mathbb Z}
\def\V{V}

\def\kq{K((t^{\Q}))}

\def\lm{{\rm{lm}}}
\def\lt{{\rm{lt}}}
\def\lexp{v}
\def\supp{{\rm Supp}}

\def\note[#1]{#1}

\newtheorem{theorem}{Theorem}[section]
\newtheorem{lemma}[theorem]{Lemma}
\newtheorem{prop}[theorem]{Proposition}
\newtheorem{cor}[theorem]{Corollary}

\theoremstyle{definition}
\newtheorem{definition}[theorem]{Definition}
\newtheorem{example}[theorem]{Example}

\numberwithin{equation}{section}

\theoremstyle{remark}

\numberwithin{equation}{section}

\begin{document}

\title[Valuations on Rational Function Fields in Two Variables]
{Reversely Well-Ordered Valuations on Rational Function Fields in Two Variables}

\author{Edward Mosteig}
\address{Department of Mathematics, Loyola Marymount University, Los Angeles, CA 90045}
\email{emosteig@lmu.edu}

\begin{abstract}
We  examine valuations on a rational function field $K(x,y)$ and analyze their behavior when restricting to an underlying polynomial ring $K[x,y]$. 
Motivated to solve the ideal membership problem in polynomial rings using Moss Sweedler's framework of generalized Gr\"obner bases, we produce an infinite collection of valuations $\lexp: K(x,y) \to \Z \oplus \Z$ such that $\lexp(K[x,y]^*)$ is reversely well-ordered. In addition, we construct a surprising example  where $\lexp(K[x,y]^*)$  is nonpositive, yet  not reversely well ordered.
\end{abstract}

\maketitle


\section{Introduction} \label{intro}
\setcounter{equation}{0}

Given an abelian group $\Gamma$,  a subset $S \subset \Gamma$ and  $\alpha \in \Gamma$ , we define $\alpha S =  S\alpha = \{ \alpha s \mid s \in S\}$ and $\alpha+S = S+\alpha = \{\alpha+s \mid s \in S\}$.
Whenever $R$ is a monoid, written additively, we denote by
 $R^*$
the nonzero elements of $R$.  (This applies in particular to the additive group of a ring.)
Given monoids $M$ and $N$ contained in an abelian group $\Gamma$,  we define  $M + N =\{m +n \mid m \in M, n \in N\}$ and $M-N = \{ m-n \mid m \in M, n \in N \}$.

In the 1980s, in order to develop an alternative method of
solving the ideal membership problem in polynomial rings, Moss Sweedler produced a generalization of Gr\"obner bases where monomial orders were replaced by valuations.
 The
fundamental idea is that monomial orders are well orderings on the set
of monomials, which leads to a natural reduction process using multivariate polynomial division.   Valuations, however,
permit a more general reduction process than provided by monomial orders.  The development of this theory can be found entirely in  Sweedler's unpublished
manuscript \cite{sweedler}.

\begin{definition}\label{def:valuation}
Given a field extension $L\mid K$ and a totally ordered abelian group $(\Gamma, +, <)$, we say that
 $$v: L \to \Gamma \cup \{\infty\}$$ is a $K$-{\bf valuation   on $L$} if for all $f,g \in L,$ the following hold:
\begin{enumerate}
\item[(i)] $\lexp(f) = \infty$ if and only if $f=0$;
\item[(ii)] $\lexp(fg) = \lexp(f) + \lexp(g)$;
\item[(iii)] $ \lexp(f+g) \ge \min\{ \lexp(f), \lexp(g) \}$;
\item[(iv)] If $\lexp(f) = \lexp(g) \neq \infty$, then
$\exists!\lambda\in {K}$ such that  $\lexp(f+\lambda g) > \lexp(f)$.
\end{enumerate}
From the axioms above, the strong triangle inequality follows: $ \lexp(f+g) = \min\{ \lexp(f), \lexp(g) \}$
whenever $\lexp(f) \neq \lexp(g)$.
Note that condition (iv) means that not only is $\lexp$  trivial on $K$, but $K$ is a field of
representatives for the residue field ${\mathcal O}_v/{\mathcal M}_v$, where ${\mathcal O}_v$ is the valuation ring ${\mathcal O}_v = \{ f \in L^* \mid v(f)  \ge 0\}$ with maximal ideal ${\mathcal M}_v = \{ f \in L^* \mid v(f)  > 0\}$.
When $\Gamma \cong \Z$, we call $v$ a {\bf discrete valuation of rank 1}. If $A$ is a domain such that $A \subseteq L$, we say that $v\mid_A$ is discrete of rank 1 whenever the $v$-image of the set of  nonzero elements of the field of fractions of $A$ is isomorphic to $\Z$.
\end{definition}

Although  mathematicians have analyzed the restriction of valuations to polynomial rings, historically the focus has almost been entirely on the case when $\lexp$ is {\em nonnegative} on the polynomial ring. However, our research has a distinctly different flavor since we require valuations to be {\em nonpositive} on the polynomial ring. One notable exception is in the area of order domains, where such valuations have been investigated in papers, such as \cite{gepe} and \cite{os}, with the purpose of developing algorithms for algebraic geometry codes.

In order to use
the algorithms constructed by Sweedler in \cite{sweedler} to solve the ideal membership problem in the polynomial ring $K[x_1, \dots, x_n]$, we must consider $K$-valuations on the rational function field $L=K(x_1, \dots, x_n)$.
We say that a partially ordered set is {\bf reversely well-ordered} if every nonempty subset  has a largest element.
A nontrivial $K$-valuation $\lexp$
on $K(x_1, \dots, x_n)$ can be used in Sweedler's generalized theory of Gr\"obner bases provided that $\lexp(K[x_1, \dots, x_n]^*)$ is reversely well-ordered.
From this, it trivially follows that $\lexp(K[x_1, \dots, x_n]^*)$ must be nonpositive.

For the entirety of this paper, we focus on the polynomial ring $K[x,y]$ in two variables over a field $K$ of arbitrary characteristic. Constructions of $K$-valuations $\lexp$ on $K(x,y)$ with $\lexp(K[x,y]^*)$ reversely well-ordered are investigated  in 
\cite{ms1}, \cite{ms2}, \cite{mo1} and \cite{mo2}.

We define $\kq$  to be the set of all maps $z:\Q \to K$ such that  $\supp(z)= \{e \in \Q : z(e) \neq 0 \}$ is well-ordered. 
 The set of such maps,  which we call Hahn power series,  was shown in \cite{hahn} to form a field in which addition is defined pointwise and multiplication is defined via convolution; i.e., if $z,u \in \kq$ and $i \in \Q$, then $(z + u)(i) = z(i) + u(i)$ and $(zu)(i) = \sum_{j+k=i} z(j)u(k)$.
To justify the name `series', we often use the notation $z  = \sum_{e \in \supp(z)} z(e) t^e$. 
 Exploiting this notation, we see there is a natural embedding $K(t) \hookrightarrow \kq$ where $t$ is sent to $t^1$, which is a series consisting of exactly one term.
 For each $z \in \kq$ that is transcendental over $K(t)$, there is a corresponding valuation $\lexp$ given by
 $$\begin{array}{rcl}
  K(x,y) &\rightarrow & \Q \\
 f & \mapsto & \min \supp(\varphi_z(f)) \\
 \end{array}$$
where $\varphi_z: K(x,y)^*  \to  \kq$ is the unique $K$-homomorphism such that $\varphi(x) = t^{-1}$ and $\varphi(y) = z$.
 The image $\lexp(K[x,y]^*)$ is often quite complex and poorly behaved. In some cases, $\lexp(K[x,y]^*)$ is non-positive and yet not reversely well-ordered. 
Such examples  in \cite{mo2} depend partially on  \cite{ked1} when $K$ has positive characteristic.  However, in \cite{ked2}, Kedlaya constructed a counterexample to a theorem appearing in \cite{ked1} and proceeds to produce a corrected version. Fortunately, Kedlaya also demonstrates in \cite{ked2} that the results in \cite{mo2} remain unaffected by this change.

Throughout the remainder of this paper, we turn our attention to the case when the value group is $\Z \oplus \Z$.
 One would expect the behavior of such valuations to be comparatively tame, though we provide an example at the end of this paper that suggests the situation is much more interesting than one would naively predict.

In Section \ref{section:growth}, we consider a $K$-valuation $\lexp$ on $L$ where $A$ is a domain  and $V$ is a $K$-vector space such that $K \subseteq A \subseteq V \subseteq L$. The images of the restriction of $\lexp$ to domains and vector spaces are monoids, and in order to properly study these objects, we first define the notion of a quotient monoid.

Given a submonoid $M$ of a commutative monoid $N$, we define an equivalence
relation on $N$ by setting $n_1 \sim_M n_2$ if and only if there exist $m_1 ,
m_2 \in M$ such
that $m_1 + n_1 = m_2 + n_2$ . Denote by $N/M$ the collection of all equivalence
classes under this relation, and for $n \in N$, let $\overline{n}$ denote the equivalence class
containing $n$.
We define a quotient map from $N$ to $N/M$ that sends $n$ to $\overline{n}$. The
set $N/M$ has
an additive monoid structure, called the quotient monoid of $N$ with respect to
$M$,
where we define $\overline{n_1} + \overline{n_2} = \overline{n_1 + n_2}$.  In general, if $M$ is understood based
on context, then we  write $\overline{N}$ in place of $N/M$.

We observe in Lemma \ref{lemma:digging} that given $c \in L$, there is at most one element of
$\lexp((V+Ac)^*)/\lexp(A^*)$ that is not in $\lexp(V^*)/\lexp(A^*)$.
In Theorem \ref{theorem:quotientplusone}, we provide sufficient conditions for this bound to be tight.

In Section \ref{valpoly}, we use the results of Section \ref{section:growth} to study $K$-valuations on $K(x,y)$ with value group $\Z \oplus \Z$.

In Section \ref{section:construction}, we demonstrate how to construct a class of $K$-valuations on $K(x,y)$ using representations of polynomials as linear combination of powers of a fixed element of $K(x)[y]$.
  This work provides a concrete method for  computing of the image of an arbitrary element of $K[x,y]^*$.

In Example \ref{ex:suitablevals} of Section \ref{examples}, we produce a collection of $K$-valuations $\lexp: K(x,y) \to \Z \oplus \Z$ with $\lexp(K[x,y]^*)$ reversely well ordered. However, even with value group $\Z \oplus \Z$, it is possible that the set $\lexp(K[x,y]^*)$ can be poorly behaved.
We demonstrate this by constructing a $K$-valuation  $\lexp: K[x,y]^* \to \Z \oplus \Z$ such
that $\lexp(K[x,y]^*)$ is nonpositive and yet not finitely-generated. In fact, $\lexp(K[x,y]^*)$ is  not even reversely well ordered in this final example.

\section{Bounds on the Growth of Valuations}\label{section:growth}

In this section, we investigate the collection of $\lexp$-images when extending a valuation $\lexp$ from one vector space to another.  In particular, we examine the codimensions of a chain of $K$-vector spaces 
$$V_1 \subseteq V_2 \subseteq V_3 \subseteq  \cdots $$
where each is contained in a  field $L$ endowed with a nontrivial $K$-valuation.
We are particularly interested in the case when the $v$-images of these vector spaces are reversely well ordered. 

Throughout this section,  we assume that $\lexp$ is a $K$-valuation on $L$ such that
$$K \subseteq A \subseteq V \subseteq L,$$ where  $A$ is a domain and $V$ is a $K$-vector space.
Note that since $A$ and $L$ both contain $K$, they are also $K$-vector spaces.

\begin{lemma}\label{lemma:digging}
Given $c \in L$, there is at most one element of
$\lexp((V+Ac)^*)/\lexp(A^*)$ that is not in $\lexp(V^*)/\lexp(A^*)$.
\end{lemma}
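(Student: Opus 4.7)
\emph{Proof proposal.} I plan to argue by contradiction: suppose there are two distinct classes $\overline{\lexp(f_1)}, \overline{\lexp(f_2)} \in \lexp((V+Ac)^*)/\lexp(A^*)$, neither lying in $\lexp(V^*)/\lexp(A^*)$. Writing $f_i = v_i + a_i c$ with $v_i \in V$ and $a_i \in A$, I first note that each $a_i$ must be nonzero: if $a_i = 0$, then $f_i = v_i \in V^*$ would force $\overline{\lexp(f_i)}$ into $\lexp(V^*)/\lexp(A^*)$, contradicting the assumption.

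The key construction is to consider $g := a_2 f_1 - a_1 f_2 \in L$. The $c$-terms cancel, yielding $g = a_2 v_1 - a_1 v_2$, an expression purely in $V$ and $A$. If $g = 0$, then $a_2 f_1 = a_1 f_2$ gives $\lexp(f_1) - \lexp(f_2) = \lexp(a_1) - \lexp(a_2) \in \lexp(A^*) - \lexp(A^*)$, so $\overline{\lexp(f_1)} = \overline{\lexp(f_2)}$, a contradiction. Hence $g \neq 0$. Since the two classes differ, $\lexp(a_2 f_1) \neq \lexp(a_1 f_2)$, and the strong triangle inequality (axiom (iii)) forces $\lexp(g) = \min\{\lexp(a_2 f_1), \lexp(a_1 f_2)\}$; without loss of generality this minimum is $\lexp(a_2) + \lexp(f_1)$.

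The contradiction will emerge once I establish $\lexp(g) \in \lexp(V^*) + (\lexp(A^*) - \lexp(A^*))$, because this puts $\lexp(f_1) = \lexp(g) - \lexp(a_2)$ in the same set and places $\overline{\lexp(f_1)}$ in $\lexp(V^*)/\lexp(A^*)$. To this end, I analyze $g = a_2 v_1 - a_1 v_2$ using the strong triangle inequality and axiom (iv). When $\lexp(a_2 v_1) \neq \lexp(a_1 v_2)$, the value $\lexp(g)$ equals the smaller of $\lexp(a_2) + \lexp(v_1)$ and $\lexp(a_1) + \lexp(v_2)$, both manifestly in $\lexp(V^*) + (\lexp(A^*) - \lexp(A^*))$. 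When those two summand valuations coincide but the unique scalar $\mu \in K$ from axiom (iv) with $\lexp(a_2 v_1 + \mu a_1 v_2) > \lexp(a_2 v_1)$ is not $-1$, there is no cancellation in $g$ and $\lexp(g)$ equals the common value, again in the required set.

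The delicate remaining situation is $\lexp(a_2 v_1) = \lexp(a_1 v_2)$ with leading-term cancellation in $g$, forcing $\lexp(g)$ strictly greater than the common value. Here I would invoke axiom (iv) iteratively: in the sub-subcase $\lexp(v_1) = \lexp(v_2)$, the cancellation condition pins down the unique $\alpha \in K$ with $\lexp(v_1 - \alpha v_2) > \lexp(v_1)$ to match the analogous scalar for $a_1$ against $a_2$; setting $v_1'' := v_1 - \alpha v_2 \in V$ and $a_1'' := a_1 - \alpha a_2 \in A$ rewrites $g = a_2 v_1'' - a_1'' v_2$ in the same form with both $\lexp(v_1'')$ and $\lexp(a_1'')$ strictly greater than before. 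Iterating drives the relevant cancellation valuation strictly upward, and since $g \in L^*$ has a fixed valuation in $\Gamma$, the process must eventually reach a noncancellation configuration, completing the argument. The main obstacle is the parallel sub-subcase $\lexp(v_1) \neq \lexp(v_2)$, where axiom (iv) cannot be applied directly to $v_1$ and $v_2$ inside $V$; there I plan to work inside $L$ with the rescaled element $(a_1/a_2) v_2$, which has the same valuation as $v_1$, and track how the cancellation in $g$ propagates through successive applications of axiom (iv), relying on the fact that a nonzero $g$ precludes an unbounded descent in the lower bounds for $\lexp(g)$.
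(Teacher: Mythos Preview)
Your setup matches the paper's exactly: assume two ``bad'' classes, write $f_i=v_i+a_ic$ with $a_i\neq 0$, form $g=a_2f_1-a_1f_2=a_2v_1-a_1v_2$, and use the strong triangle inequality on $a_2f_1$ and $a_1f_2$. The divergence comes right after this, and it is where your proof goes astray.

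The paper's key observation---which you overlook---is that $g=a_2v_1-a_1v_2$ already lies in $V$. (The standing hypotheses in Section~2 tacitly treat $V$ as an $A$-module; this is explicit in the very next lemma, where one writes $V=Au_1+\cdots+Au_d$, and it holds in every application.) Once $g\in V^*$, you get $\overline{\lexp(g)}\in\lexp(V^*)/\lexp(A^*)$ for free, and since $\lexp(g)$ equals one of $\lexp(a_2)+\lexp(f_1)$ or $\lexp(a_1)+\lexp(f_2)$, the corresponding $\overline{\lexp(f_i)}$ lands in $\lexp(V^*)/\lexp(A^*)$, a contradiction. No iteration, no case analysis on cancellation.

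By contrast, your plan to \emph{deduce} $\lexp(g)\in\lexp(V^*)+(\lexp(A^*)-\lexp(A^*))$ from the shape $a_2v_1-a_1v_2$ via repeated applications of axiom~(iv) has a real gap. In your ``delicate'' cancellation case you produce a strictly increasing sequence of values $\lexp(a_2v_1^{(k)})$ all bounded above by the fixed value $\lexp(g)$, and conclude that the process must terminate. But $\Gamma$ is an arbitrary totally ordered abelian group: a strictly increasing, bounded-above sequence need not stabilize (think $\Gamma=\Q$). So termination is not guaranteed, and your own acknowledged ``main obstacle'' (the sub-subcase $\lexp(v_1)\neq\lexp(v_2)$) is left unresolved. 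The fix is not to repair the iteration but to drop it entirely in favor of the one-line membership $g\in V$.
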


\begin{proof}
Given  $a_1,a_2 \in A^*, v_1,v_2 \in V$ such
that
\begin{equation}\label{eq:dig-at-most}
\overline{\lexp(v_1+a_1c)}, \overline{\lexp(v_2+a_2c)} \not\in \lexp(V^*)/\lexp(A^*),
\end{equation}
we must show that $\overline{\lexp(v_1+a_1c)}
 = \overline{\lexp(v_2+a_2c)}$.  If $\lexp(a_2(v_1+a_1c)) = \lexp(a_1(v_2+a_2c))$, then
$ \lexp(a_2)  + \lexp(v_1+a_1c) =  \lexp(a_1) + \lexp(v_2+a_2c) $ and so
$\overline{\lexp(v_1+a_1c)} = \overline{\lexp(v_2+a_2c)}$.  Thus, we need only consider the
possibility  $\lexp(a_2(v_1+a_1c)) \neq \lexp(a_1(v_2+a_2c))$.  

Define
$w = a_2(v_1 + a_1c)-a_1 (v_2+a_2c) = a_2v_1-a_1v_2 \in V$, in which case
$\overline{\lexp(w)} \in \lexp(V^*)/\lexp(A^*)$.
 By the strong
triangle inequality,
$$\lexp(w) = \min\{\lexp(a_2(v_1+a_1c)),
\lexp(a_1(v_2+a_2c))\}.$$
Suppose that $\lexp(w) = \lexp(a_2(v_1+a_1c))$, in which case
$\overline{\lexp(w)} =
\overline{\lexp(v_1+a_1c)}$.  However, $\overline{\lexp(v_1+a_1c)} \not\in
\lexp(V^*)/\lexp(A^*)$ by (\ref{eq:dig-at-most}), which contradicts the
statement that $\overline{\lexp(w)} \in \lexp(V^*)/\lexp(A^*)$.  The conclusion follows
similarly if we consider the case $\lexp(w) = \lexp(a_1(v_2+a_2c))$.
\end{proof}

Next we extend  a vector space by adding more than one basis element.

\begin{lemma}\label{prop:digging-card}
If $\dim_{K}V/A < \infty$, then $\lexp(V^*)/\lexp(A^*)$ has cardinality at most $\dim_{K}V/A$.
\end{lemma}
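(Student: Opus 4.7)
My plan is to prove this by induction on $n = \dim_K V/A$, iterating Lemma \ref{lemma:digging} along a flag of one-dimensional extensions of $A$ inside $V$.

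First I would pick $c_1, \ldots, c_n \in V$ whose images form a $K$-basis of $V/A$, and form the flag
\[
A = V_0 \subseteq V_1 \subseteq \cdots \subseteq V_n = V, \qquad V_i = V_{i-1} + K c_i,
\]
so that each step raises the $K$-codimension of the subspace over $A$ by exactly one. The key observation is that because $K \subseteq A$, we have the inclusion $V_i = V_{i-1} + K c_i \subseteq V_{i-1} + A c_i$, and therefore
\[
\lexp(V_i^*)/\lexp(A^*) \subseteq \lexp((V_{i-1} + A c_i)^*)/\lexp(A^*).
\]
Lemma \ref{lemma:digging}, applied with $V$ replaced by $V_{i-1}$ and with $c = c_i$, says that the right-hand set contains at most one element not already in $\lexp(V_{i-1}^*)/\lexp(A^*)$. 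So the passage from $V_{i-1}$ to $V_i$ contributes at most one additional equivalence class to the quotient $\lexp(\cdot^*)/\lexp(A^*)$.

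Telescoping these bounds along the flag, starting from $V_0 = A$, then produces the claimed estimate on $|\lexp(V^*)/\lexp(A^*)|$. The only technical point that needs care is the reduction from $V_i$ to $V_{i-1} + A c_i$, since Lemma \ref{lemma:digging} is formulated in terms of the $A$-extension $V_{i-1} + A c_i$ rather than the $K$-extension $V_i$; this reduction is where the standing hypothesis $K \subseteq A$ is used. Beyond that, tracking the starting count at $V_0 = A$ and running the induction is routine.
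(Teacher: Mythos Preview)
Your induction along a flag is the same strategy the paper uses, but your count comes out off by one. Starting from $V_0 = A$ you have $\#\bigl(v(A^*)/v(A^*)\bigr) = 1$, and each of your $n$ steps adds at most one equivalence class, so telescoping yields
\[
\#\bigl(v(V^*)/v(A^*)\bigr) \le 1 + n,
\]
not $\le n = \dim_K V/A$. The sentence ``tracking the starting count at $V_0 = A$ and running the induction is routine'' is exactly where this slips by: the starting count is $1$, not $0$. Your embedding $V_i \subseteq V_{i-1} + Ac_i$ is fine; the problem is purely in the bookkeeping.

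The paper avoids this extra $+1$ by building the chain from $A$-generators rather than from a $K$-basis of $V/A$: it writes $V = Au_1 + \cdots + Au_d$ and sets $V_i = Au_1 + \cdots + Au_i$, so that $V_{i+1} = V_i + Au_{i+1}$ and Lemma~\ref{lemma:digging} applies directly without any embedding step. The point is that the base case is now $V_1 = Au_1$, for which $v(V_1^*)/v(A^*)$ is already a singleton (every nonzero element of $Au_1$ has value in $v(u_1) + v(A^*)$), so only $d-1$ further applications of the lemma are needed, giving the bound $d$. In your flag the singleton occurs at $V_0 = A$, one step \emph{before} the first extension rather than after it, which is what costs you the extra unit.
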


\begin{proof}
Write \begin{equation*}
V= Au_1 + \cdots + Au_d
\end{equation*}
where $d = \dim_K V/A$ and $u_1, \dots, u_d \in V$.
Define  $V_i = Au_1 + \cdots + Au_i$ for $1 \le i \le d$.  By
Lemma \ref{lemma:digging}, for each index $i$ such that $0 \le i \le d-1$,
\begin{equation*}
\# \lexp(V_{i+1}^*)/\lexp(A^*)  =   \# \lexp((V_i + Au_{i+1})^*)/\lexp(A^*) \le \#\lexp(V_i^*)/\lexp(A^*) + 1,
\end{equation*}
and so by induction, it follows that 
$$ \# \lexp(V_d^*)/\lexp(A^*) \le \# \lexp(V_{1}^*)/\lexp(A^*) + (d-1).$$
Since $\lexp(V_1^*)/\lexp(A^*)$ has cardinality 1, it follows that
$\# \lexp(V^*)/\lexp(A^*) \le d$.
\end{proof}

Our goal is to construct conditions whereby the 
bound produced in  Lemma  \ref{prop:digging-card} is tight. To this end, we first justify a few supporting lemmas.

\begin{lemma}\label{lemma:numericalsemigroup}
Suppose $\lexp(A^*)$ is nonpositive and $\lexp(A^*) - \lexp(A^*) = \Z \alpha$ 
where $\alpha$  is positive.
Then there exists $n\in\Z$ such that
 $k \alpha \in
\lexp(A^*)$ for all $k \le n$.
\end{lemma}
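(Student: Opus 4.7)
The plan is to reduce the statement to the classical Sylvester--Frobenius fact that a numerical semigroup whose elements have gcd $1$ contains all sufficiently large positive integers. The point is that all the structure one needs is already encoded in the hypotheses on $v(A^*)$.

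First I would observe that, by axiom (ii) of a valuation together with $v(1)=0$, the set $v(A^*)$ is a submonoid of $\Gamma$ containing $0$. Hence $v(A^*) = v(A^*) + 0 \subseteq v(A^*) - v(A^*) = \Z\alpha$, and since $v(A^*)$ is nonpositive while $\alpha$ is positive, we may write
\[
v(A^*) = \{k\alpha : k \in -T\}
\]
for some submonoid $T$ of $\Z_{\ge 0}$ containing $0$. The hypothesis $v(A^*)-v(A^*) = \Z\alpha$ translates to $T - T = \Z$, and the claim to be proved becomes: there exists $N \in \Z_{\ge 0}$ such that every integer $m \ge N$ lies in $T$ (taking $n = -N$).

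Next I would pick any positive $a \in T$, which exists because $T - T = \Z$ is nontrivial, and consider the reduction map $\pi: T \to \Z/a\Z$. The image $\pi(T)$ is a submonoid of the \emph{finite} abelian group $\Z/a\Z$, hence automatically a subgroup (the cyclic submonoid generated by any element is finite and closed under negation). Since $\pi(T)$ generates $\Z/a\Z$ as a group (because $T$ generates $\Z$ as a group), we must have $\pi(T) = \Z/a\Z$. Consequently, for each residue $r \in \{0,1,\dots,a-1\}$ we may choose $t_r \in T$ with $t_r \equiv r \pmod{a}$.

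Finally, set $N = \max_{0 \le r < a} t_r$. Given any integer $m \ge N$, write $m = qa + r$ with $0 \le r < a$; then $m - t_r = (q - (t_r-r)/a)\,a$ is a nonnegative multiple of $a$, so $m = t_r + \bigl((m - t_r)/a\bigr) a \in T$, because $t_r,a \in T$ and $T$ is closed under addition. This gives $[N,\infty) \cap \Z \subseteq T$, completing the proof.

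The argument is essentially routine once the translation to a submonoid of $\Z_{\ge 0}$ is made; the only substantive step is the passage from $T - T = \Z$ to $\pi(T) = \Z/a\Z$, which rests on the elementary observation that a submonoid of a finite group is a subgroup.
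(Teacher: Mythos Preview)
Your proof is correct and follows essentially the same Frobenius-style strategy as the paper: fix a positive element of the monoid, show the monoid meets every residue class modulo it, and then cover all sufficiently large integers by Euclidean division. The paper carries this out concretely inside $A$ (choosing $f_r,g_r\in A^*$ with $v(f_r/g_r)=-r\alpha$ for each residue $r$ and multiplying the $g_r$ together to clear denominators), whereas you first abstract to the numerical semigroup $T\subseteq\Z_{\ge 0}$ and obtain the residue representatives via the observation that a submonoid of a finite group is a subgroup; the underlying argument is the same.
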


\begin{proof}
Since $\lexp(A^*)$ is nonpositive, there exists $h \in A^*$ such that $\lexp(h) = -m \alpha$ for some positive integer $m$.
 If $m =1$, then $\lexp(A^*) = \alpha \Z_{\le 0}$, from which the conclusion follows. Thus, we only need to consider the case $m>1$.

For each index $r$ such that $1 \le r \le m-1$,
select $f_r, g_r\in A^*$ such that $\lexp(f_r/g_r) = -r \alpha$. If we define $h_0 = g_1g_2 \cdots g_{m-1}$, note that $\lexp(h_0) = n \alpha$ where $n$ is a negative integer. Define $h_r = (h_0/g_r)f_r \in A^*$ for $1 \le r \le m-1$. For any such index $r$,  we observe that $\lexp(h_r) =  
(n-r) \alpha$.

Given $k \le n$, we can write  $n-k = qm+r$ where $q, r \in \Z_{\ge 0}$ such that $0 \le r \le m-1$. Then
$k \alpha = (n-qm-r) \alpha = q \lexp(h) + \lexp(h_r) = \lexp(h^qh_r)\in \lexp(A^*)$.
\end{proof}

\begin{lemma}\label{lemma:finite-difference-prep}
If $v|_A$ is  discrete of rank 1  and $\lexp(A^*)$ is nonpositive, then for all $\beta \in \lexp(V^*)$, every reversely well-ordered subset of
\begin{equation}\label{eq:b-a-big-prep}
(\beta + \lexp(A^*) -
\lexp(A^*))  \setminus \lexp(V^*) \end{equation}
is finite.
\end{lemma}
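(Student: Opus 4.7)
The plan is to show that $(\beta + v(A^*) - v(A^*)) \setminus v(V^*)$ is bounded below; from this the conclusion is immediate, since a totally ordered set that is both well-ordered and reversely well-ordered must be finite. Since $v|_A$ is discrete of rank 1, I would write $v(A^*) - v(A^*) = \Z\alpha$ for some positive generator $\alpha$, so that $\beta + v(A^*) - v(A^*) = \beta + \Z\alpha$ is order-isomorphic to $\Z$.

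The core step is to exhibit, for each sufficiently negative $k$, an element of $V^*$ with valuation $\beta + k\alpha$. First, I would invoke Lemma \ref{lemma:numericalsemigroup} to obtain $n_0 \in \Z$ with $k\alpha \in v(A^*)$ for every $k \le n_0$. Fix $f \in V^*$ with $v(f) = \beta$ and, for each such $k$, an element $a_k \in A^*$ with $v(a_k) = k\alpha$. The product $a_k f$ lies in $V$ (treating $V$ as an $A$-submodule of $L$, consistent with the proof of Lemma \ref{prop:digging-card} where $V = Au_1 + \cdots + Au_d$) and satisfies $v(a_k f) = \beta + k\alpha$, so $\beta + k\alpha \in v(V^*)$ for all $k \le n_0$.

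This gives $(\beta + \Z\alpha) \setminus v(V^*) \subseteq \{\beta + k\alpha : k > n_0\}$, which is order-isomorphic to $\Z_{>n_0}$ and hence well-ordered. Any reversely well-ordered subset of a well-ordered totally ordered set is finite (since an infinite well-ordered set has a cofinal subset with no maximum), which closes the argument. The main delicate point is producing $a_k f$ as an element of $V$: this relies on $V$ being closed under multiplication by $A$, which is the implicit structural assumption throughout Section \ref{section:growth}.
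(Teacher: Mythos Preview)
Your proof is correct and follows essentially the same line as the paper's: both invoke Lemma~\ref{lemma:numericalsemigroup} to show that $\beta + k\alpha \in \lexp(V^*)$ for all $k \le n_0$, leaving only the upward tail $\{\beta + k\alpha : k > n_0\}$, which can contain no infinite reversely well-ordered subset. The paper phrases the final step as a proof by contradiction (an infinite reversely well-ordered subset would yield an infinite strictly increasing chain), whereas you argue directly that the tail is well-ordered and hence has only finite reversely well-ordered subsets; these are the same idea. Your explicit flagging of the $A$-module structure on $V$ is appropriate: the paper uses it tacitly at the same step (``$\beta + k\alpha \in \lexp(V^*)$'') without comment.
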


\begin{proof}
Suppose, towards contradiction, there exists $\beta \in \lexp(V^*)$ and a reversely well-ordered subset $R$ of $\lexp(L^*)$ such that 
\begin{equation}\label{eq:b-a-big-prep}
R \cap (\beta + \lexp(A^*) -
\lexp(A^*))  \setminus \lexp(V^*) \end{equation}
is infinite. Since $\lexp|A$ is  discrete of rank 1,  we can write $\lexp(A^*) - \lexp(A^*) = \Z \alpha,$ 
where $\alpha$ is chosen to be positive. From this, 
(\ref{eq:b-a-big-prep}) can be re-written as
\begin{equation}\label{eq:diffab}
 R \cap (\beta+ \Z \alpha)  \setminus \lexp(V^*).
\end{equation}

By Lemma \ref{lemma:numericalsemigroup}, there exists $n\in\Z$ such that
 $k \alpha \in
\lexp(A^*) \subseteq \lexp(V^*)$ for all $k \le n$.
For every such $k$, we have $\beta + k\alpha \in  \lexp(V^*)$.
  Therefore, expression (\ref{eq:diffab}) can be
written as
\begin{equation} \label{eq:b+z-alpha}
R \cap \{ \beta + k \alpha \mid k >  n  \}  \setminus \lexp(V^*).
\end{equation}
If this set had infinite cardinality, then since $\alpha$ is positive, there would be an infinite chain of
inequalities of the form
\begin{equation}
\beta + k_1 \alpha < \beta + k_2 \alpha < \beta + k_3 \alpha < \cdots
\end{equation}
Since these are all elements of the reversely well-ordered set $R$,  we have a
contradiction.
\end{proof}

\begin{lemma}\label{lemma:finite-difference}
Suppose the vector space extension $A \subseteq V$ is finite. If $v|_A$ is  discrete of rank 1  and $\lexp(A^*)$ is nonpositive, then every reversely well-ordered subset of
\begin{equation}\label{eq:b-a}
(\lexp(V^*)-\lexp(A^*)) \setminus \lexp(V^*)
\end{equation}
has finite cardinality.
\end{lemma}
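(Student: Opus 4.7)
The plan is to reduce to Lemma \ref{lemma:finite-difference-prep} by covering $\lexp(V^*) - \lexp(A^*)$ with finitely many translates of the form $\beta_i + \lexp(A^*) - \lexp(A^*)$, and then applying the earlier lemma to each translate.

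First, since $\dim_K V/A$ is finite, Lemma \ref{prop:digging-card} yields that $\lexp(V^*)/\lexp(A^*)$ has finite cardinality $d \le \dim_K V/A$. I would choose elements $\beta_1, \dots, \beta_d \in \lexp(V^*)$ representing the distinct equivalence classes, and then verify the containment
\[
\lexp(V^*) - \lexp(A^*) \;\subseteq\; \bigcup_{i=1}^d \bigl(\beta_i + \lexp(A^*) - \lexp(A^*)\bigr).
\]
Indeed, for $\gamma \in \lexp(V^*)$ the choice of representatives produces $a_1, a_2 \in \lexp(A^*)$ such that $\gamma + a_1 = \beta_i + a_2$ for some $i$, hence $\gamma = \beta_i + (a_2 - a_1)$. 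For any $a \in \lexp(A^*)$, the element $a + a_1$ again lies in $\lexp(A^*)$ since $A$ is a domain and axiom (ii) makes $\lexp(A^*)$ closed under addition, so $\gamma - a = \beta_i + a_2 - (a + a_1)$ lies in the $i$th translate on the right.

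Second, given any reversely well-ordered subset $R \subseteq (\lexp(V^*) - \lexp(A^*)) \setminus \lexp(V^*)$, the containment above lets me decompose
\[
R = \bigcup_{i=1}^d \Bigl( R \cap \bigl((\beta_i + \lexp(A^*) - \lexp(A^*)) \setminus \lexp(V^*)\bigr)\Bigr).
\]
Each piece is a reversely well-ordered subset of precisely the set handled by Lemma \ref{lemma:finite-difference-prep} (taking $\beta = \beta_i$), hence finite. A finite union of finite sets is finite, so $R$ itself is finite.

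There is no substantial obstacle here; the argument is purely assembly from earlier results. The one step warranting care is checking that the finitely many translates really cover $\lexp(V^*) - \lexp(A^*)$, which hinges on $\lexp(A^*)$ being a submonoid of the value group so that translations $a \mapsto a + a_1$ can be absorbed into $\lexp(A^*)$.
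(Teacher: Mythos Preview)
Your proof is correct and follows essentially the same approach as the paper: both use Lemma~\ref{prop:digging-card} to get finitely many representatives $\beta_i = \lexp(u_i)$ of $\lexp(V^*)/\lexp(A^*)$, cover $\lexp(V^*) - \lexp(A^*)$ by the translates $\beta_i + \lexp(A^*) - \lexp(A^*)$, and then apply Lemma~\ref{lemma:finite-difference-prep} to each piece. The only cosmetic difference is that the paper proves equality in the covering (equation~(\ref{eq:b-a-union})) while you prove only the inclusion, which is all that is needed.
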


\begin{proof}
By Lemma \ref{prop:digging-card}, the cardinality of $\lexp(V^*)/
\lexp(A^*)$ is finite, and so we can write 
\begin{equation}\label{eq:b-a-finite}
\lexp(V^*)/
\lexp(A^*) = \{ \overline{\lexp(u_1)}, \dots, \overline{\lexp(u_j)} \}
\end{equation}
where  $u_1, \dots, u_j \in V$.  Next, we claim that
\begin{equation}\label{eq:b-a-union}
\lexp(V^*) - \lexp(A^*) =  \bigcup_{i=1}^j  \lexp(u_i) + \lexp(A^*) -
\lexp(A^*). \end{equation}
Indeed, consider $\lexp(u) - \lexp(a) \in \lexp(V^*) - \lexp(A^*)$, where
$u\in V^*$, $a\in A^*$.  By (\ref{eq:b-a-finite}), we have
$\overline{\lexp(u)} = \overline{\lexp(u_i)}$ for some index $i$,
in which case $\lexp(u) + \lexp(a_2) = \lexp(u_i) + \lexp(a_1)$ for some $a_1,a_2 \in A$.
Thus, $\lexp(u) - \lexp(a) = \lexp(u_i) + \lexp(a_1) - \lexp(a) - \lexp(a_2)$, and so
$\lexp(u) - \lexp(a) \in  \bigcup  \lexp(u_i) + \lexp(A^*) -
\lexp(A^*)$.  Thus, the forward inclusion has been demonstrated.  Since
the reverse inclusion is obvious, 
 (\ref{eq:b-a-union}) follows, and so
\begin{equation}\label{eq:b-a-union-2}
(\lexp(V^*) - \lexp(A^*) ) \setminus \lexp(V^*)
=   \bigcup_{i=1}^j (\lexp(u_i) + \lexp(A^*)  - 
\lexp(A^*)) \setminus
 \lexp(V^*)
. \end{equation}

Let $R$ be a reversely well-ordered subset of $(\lexp(V^*)-\lexp(A^*)) \setminus \lexp(V^*)$. In order to demonstrate that $R$ has finite cardinality, we know by  (\ref{eq:b-a-union-2}) that
it suffices to show that
\begin{equation}\label{eq:b-a-big}
R \cap (\lexp(u) + \lexp(A^*) -
\lexp(A^*))  \setminus \lexp(V^*) \end{equation}
is a finite set for all $u \in V$.
This follows directly from Lemma \ref{lemma:finite-difference-prep}. \end{proof}

\begin{lemma}\label{lemma:abc}
Suppose $W$ is a $K$-vector space such that $V \subseteq W \subseteq L$.
 If $\lexp(V^*)/\lexp(A^*) = \lexp(W^*)/\lexp(A^*)$, then
$\lexp(W^*) \subseteq \lexp(V^*) - \lexp(A^*)$.
\end{lemma}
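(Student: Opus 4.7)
The plan is to unfold the definition of the quotient relation $\sim_{\lexp(A^*)}$ and rewrite $\lexp(w)$ as a difference of something in $\lexp(V^*)$ and something in $\lexp(A^*)$.

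First I would take an arbitrary $w \in W^*$ and chase its class through the equality of quotient monoids. Because $\overline{\lexp(w)} \in \lexp(W^*)/\lexp(A^*) = \lexp(V^*)/\lexp(A^*)$, there must exist $v \in V^*$ with $\overline{\lexp(w)} = \overline{\lexp(v)}$. Unfolding the equivalence yields $a_1, a_2 \in A^*$ satisfying
\begin{equation*}
\lexp(a_1) + \lexp(w) = \lexp(a_2) + \lexp(v),
\end{equation*}
which, via condition (ii) of the valuation, is the same as $\lexp(a_1 w) = \lexp(a_2 v)$. Solving for $\lexp(w)$ gives
\begin{equation*}
\lexp(w) = \lexp(a_2 v) - \lexp(a_1).
\end{equation*}

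The final step is to observe that $a_2 v \in V^*$, so that $\lexp(a_2 v) \in \lexp(V^*)$, whence $\lexp(w) \in \lexp(V^*) - \lexp(A^*)$. Since $a_1 \in A^*$ is already a legitimate second component, this completes the argument.

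The main subtle point is the justification that $a_2 v$ lies in $V^*$: this uses that $V$ is stable under multiplication by elements of $A$ (the same property implicitly used when writing $V = Au_1 + \cdots + Au_d$ in the proof of Lemma \ref{prop:digging-card}), together with the fact that $a_2, v \ne 0$ and $L$ is a field to ensure $a_2 v \ne 0$. Once this is in hand, the rest is a mechanical unpacking of the quotient relation and multiplicativity of $\lexp$.
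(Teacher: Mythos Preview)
Your argument is correct and is essentially identical to the paper's own proof: both pick an element of $\lexp(W^*)$, match its class with some $\overline{\lexp(u)}$ for $u\in V^*$, unfold the equivalence $\sim_{\lexp(A^*)}$ to get $\lexp(a_1)+\lexp(w)=\lexp(a_2)+\lexp(u)$, and solve for $\lexp(w)$. The paper simply writes $\lexp(u)+\lexp(a_2)-\lexp(a_1)\in\lexp(V^*)-\lexp(A^*)$ without further comment, whereas you make explicit the step $\lexp(u)+\lexp(a_2)=\lexp(a_2u)\in\lexp(V^*)$, which indeed relies on $AV\subseteq V$; as you observe, this $A$-module structure is used implicitly elsewhere in Section~\ref{section:growth} and holds in all the applications.
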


\begin{proof}
Let $\alpha \in \lexp(W^*)$. Since $\lexp(V^*)/\lexp(A^*) =
\lexp(W^*)/\lexp(A^*)$, we know that $\overline{\alpha} = \overline{\lexp(u)}$ for some $u\in V^*$. Thus, for some $a_1, a_2 \in A$,
we have $\alpha  + \lexp(a_1) = \lexp(u) + \lexp(a_2)$.  Therefore, $\alpha = \lexp(u) + \lexp(a_2) -
\lexp(a_1) \in \lexp(V^*) - \lexp(A^*)$.
\end{proof}

We are now in a position to produce a set of conditions that guarantees that the
bound produced in  Lemma  \ref{lemma:digging} is tight. 
First, we define
the $K$-vector space $A^{-1}V$ by
\begin{equation*}\label{local}
A^{-1}V = \{ v/a \mid a \in A^*, v\in V\}.
\end{equation*}

\begin{theorem}\label{theorem:quotientplusone}
Suppose the vector space extension $A \subseteq V$ is finite,
$v|_A$ is discrete of rank 1, and $\lexp(A^*)$ is nonpositive.
Given $c \in L \setminus A^{-1}V$ such that $\lexp((V+Ac)^*)$ is reversely well ordered, there is exactly one element in
$\lexp((V+Ac)^*)/\lexp(A^*)$ that is not in $\lexp(V^*)/\lexp(A^*)$.
\end{theorem}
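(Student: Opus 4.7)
The plan is to argue by contradiction, since Lemma~\ref{lemma:digging} already supplies the ``at most one'' bound. I would suppose $\lexp((V+Ac)^*)/\lexp(A^*) = \lexp(V^*)/\lexp(A^*)$ and aim to contradict $c \notin A^{-1}V$. Applying Lemma~\ref{lemma:abc} with $W = V+Ac$ immediately yields $\lexp((V+Ac)^*) \subseteq \lexp(V^*) - \lexp(A^*)$, and combining this with the reverse well-ordering hypothesis and Lemma~\ref{lemma:finite-difference} forces the set $F := \lexp((V+Ac)^*) \setminus \lexp(V^*)$ to be finite.

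This containment delivers a reduction step: for any $g \in (V+Ac)^*$ there exist $v \in V^*$ and $a \in A^*$ with $\lexp(ag) = \lexp(v)$, and axiom (iv) produces $\lambda \in K$ with $\lexp(ag - \lambda v) > \lexp(v)$. Starting from $g_0 = c$, I would iterate $g_{n+1} = a_n g_n - \lambda_n v_n \in V + Ac$, taking $a_n = 1$ whenever $\lexp(g_n) \in \lexp(V^*)$ (\emph{Case A}, which yields $\lexp(g_{n+1}) > \lexp(g_n)$) and $\lexp(a_n) < 0$ otherwise (\emph{Case B}). Writing $g_n = b_n c + w_n$ with $b_n \in A^*$ and $w_n \in V$, the hypothesis $c \notin A^{-1}V$ prevents $g_n$ from ever vanishing, so the iteration is infinite and lives inside the reversely well-ordered set $\lexp(V^*) \cup F$.

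If only finitely many $n$ have $\lexp(g_n) \in F$, then eventually every step is Case A and the tail of $\lexp(g_n)$ is strictly increasing, contradicting reverse well-ordering. Otherwise, finiteness of $F$ forces some $\gamma \in F$ to be attained infinitely often; choose $n_1 < n_2$ with $\lexp(g_{n_1}) = \lexp(g_{n_2}) = \gamma$. Not all intermediate steps can be Case A (else $\lexp(g_{n_2}) > \gamma$), so at least one Case B contributes $\lexp(b_{n_1}/b_{n_2}) = -\sum_{n_1 \le i < n_2}\lexp(a_i) > 0$. Since $K$ is trivially valued, $b_{n_1}/b_{n_2} \notin K^*$, and hence the axiom-(iv) scalar $\lambda \in K^*$ yielding $\lexp(g_{n_1} - \lambda g_{n_2}) > \gamma$ cannot equal $b_{n_1}/b_{n_2}$; so $h := g_{n_1} - \lambda g_{n_2}$ is a nonzero element of $(V+Ac)^*$ whose $c$-coefficient $b_{n_1} - \lambda b_{n_2}$ lies in $A^*$, with $\lexp(h) > \gamma$.

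Applying the same iteration to $h$ re-runs the dichotomy, but its $\lexp$-values start above $\gamma$, so any $\gamma' \in F$ attained infinitely often by the new sequence must satisfy $\gamma' > \gamma$. Iterating this ``combine-and-restart'' move produces a strictly ascending chain in the finite set $F$, which cannot exist; at some level the iteration must fall into the Case-A-forever alternative, completing the contradiction. The main subtlety is the bookkeeping across restarts: one must verify that each combined element retains a nonzero $c$-coefficient in $A^*$ so that the nonvanishing argument from $c \notin A^{-1}V$ persists, and the crucial leverage is that $K$ is trivially valued, which keeps denominator quotients $b_{n_1}/b_{n_2}$ arising from Case-B products out of $K$.
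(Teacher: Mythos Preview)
Your overall architecture matches the paper's: assume equality of the quotients, use Lemma~\ref{lemma:abc} and Lemma~\ref{lemma:finite-difference} to get a finite ``bad set'' $F$, then build an infinite reduction sequence that contradicts reverse well-ordering. The difficulty is in the claim that drives your ascending-chain argument, namely that after restarting from $h$ with $\lexp(h)>\gamma$, any $\gamma'\in F$ attained infinitely often by the new sequence must satisfy $\gamma'>\gamma$. This does not follow. In your Case~B step you pass from $g_n$ to $g_{n+1}=a_n g_n-\lambda_n v_n$, and all you know is $\lexp(g_{n+1})>\lexp(a_n)+\lexp(g_n)$ with $\lexp(a_n)<0$; nothing prevents $\lexp(g_{n+1})$ from landing \emph{below} $\lexp(g_n)$, and in particular below $\gamma$. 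So the restarted sequence can drop back down and recur at some $\gamma''\le\gamma$, and the ``strictly ascending chain in $F$'' never materializes. The bookkeeping you flag (nonzero $c$-coefficient across restarts) is fine; the problem is monotonicity of the values, not nondegeneracy of the coefficients.

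The paper avoids this by never multiplying by an element of $A$ during the reduction. It fixes a single transcendental $a\in A$ with $\lexp(a)<0$, works inside $W=\{u+p(a)c: u\in V,\ p\in K[x]\}$, and chooses once and for all representatives $w_j+p_j(a)c$ for each element of the finite set $S=\lexp(W^*)\setminus\lexp(V^*)$, with $\delta=\max_j\deg p_j$. The iterates are $f_i=u_i+q_i(a)c$ with $\deg q_i>\delta$ maintained throughout. When $\lexp(f_{i-1})\in\lexp(V^*)$ one subtracts an element of $V$; when $\lexp(f_{i-1})=s_j\in S$ one subtracts a $K$-multiple of the \emph{fixed} representative $w_j+p_j(a)c$. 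In both cases $\lexp(f_i)>\lexp(f_{i-1})$ directly, so the sequence is strictly increasing from the start and there is no restart. The degree inequality $\deg q_{i-1}>\delta\ge\deg p_j$ is exactly what guarantees $q_i=q_{i-1}+\lambda p_j\neq 0$, replacing your valuation argument for nonvanishing $c$-coefficients. If you want to repair your approach without introducing the polynomial structure, you would need either to fix representatives for $F$ and find some other invariant that keeps the $c$-coefficient nonzero after subtracting them, or to control the size of $\lexp(a_n)$ in Case~B tightly enough to force eventual escape above $\max F$; neither is automatic.
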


\begin{proof}
Since $\lexp(A^*)$ is nonpositive, we can fix $a \in A$ such that $v(a) < 0$. Since $\lexp$ is trivial on $K$, for every polynomial $p(x)$ in the polynomial ring $K[x]$, we have $\lexp(p(a)) = (\deg_x f) \lexp(a)<0$,  and so $\lexp(p(a))$ obviously cannot be $\infty$. Therefore, $p(a) =0$ precisely when the coefficients of $p(x)$ are all zero, and so $a$ must be transcendental. Define
$$W = \{ u + p(a) c \mid u \in V, \ p(x) \in K[x]^*\}.$$
 The fact that  $\lexp((V+Ac)^*)$ is reversely well ordered guarantees that $\lexp(W^*)$ is reversely well ordered.

By Lemma \ref{lemma:digging}, there is at most one element in
$\lexp((V+Ac)^*)/\lexp(A^*)$ that is not in $\lexp(V^*)/\lexp(A^*)$.
Suppose, for contradiction, that
\begin{equation}
\lexp((V+Ac)^*)/\lexp(A^*) = \lexp(V^*)/\lexp(A^*).
\end{equation}
From the inclusions $V \subseteq W \subseteq V+Ac$, it follows that
\begin{equation}
\lexp(W^*)/\lexp(A^*) = \lexp(V^*)/\lexp(A^*).
\end{equation}

Since $\lexp(W^*)$ is reversely well ordered, the set $S$ given by
\begin{equation}\label{eq:defSimplicit}
S= \lexp(W^*) \cap (\lexp(V^*)-\lexp(A^*))  \setminus \lexp(V^*)
\end{equation}
must   be reversely well ordered, and hence by Lemma \ref{lemma:finite-difference} it must have finite cardinality.
  By Lemma \ref{lemma:abc},
$$\lexp(W^*) \subseteq \lexp(V^*) - \lexp(A^*),$$ and so
 $$S = \lexp(W^*) \setminus \lexp(V^*).$$

Write $S = \{s_1, \dots, s_n\}$, and for each index $j$, 
fix $w_j \in V$ and an element $p_j(x)$ in the polynomial ring $K[x]$ such that $\lexp(w_j+p_j(a)c) = s_j$. Define $\delta_j = \deg_x(p_j)$ and $\delta = \max\{\delta_1, \dots, \delta_n\}$.

We will define an infinite sequence $\{ f_i \}_{i \in \Z_{\ge 0}}$ of nonzero elements of $W$ of
the form $f_i = u_i + q_i(a) c$, where $u_i \in V$ and  $q_i$ is a polynomial of degree greater than $\delta$. Furthermore, our sequence will be constructed so that 
$\{ \lexp(f_i) \}_{i \in \Z_{\ge 0}}$ is increasing, which contradicts the assertion that $\lexp(W^*)$ is reversely well ordered.

Since $S$ has finite cardinality, we know that for $d \gg 0$,
\begin{equation}\label{eq:dgg0}
\lexp(w_1a^d + p_1(a)a^d c) = \lexp(w_1+p_1(a)c) + d \lexp(a) \not\in S.
\end{equation}
Select $d > \delta$ such that (\ref{eq:dgg0}) holds, in which case
$$
\lexp(w_1a^d + p_1(a)a^d c) \in \lexp(V^*).
$$
We define $f_0 = u_0 + q_0(a) c$, where $u_0= w_1a^d$ and $q_0(x) =  p_1(x)x^d$.

Given $f_0, \dots, f_{i-1}$, we show how to construct $f_i $. We divide this into two cases, depending on whether $\lexp(f_{i-1}) \in \lexp(V^*)$.

\begin{enumerate}
\item[Case 1:] Suppose $\lexp(f_{i-1}) \in \lexp(V^*)$, in which case
$\lexp(f_{i-1}) = \lexp(u_{i-1}')$ for some $u_{i-1}' \in V^*$. Then there exists $\mu_{i-1} \in K^*$ such that $\lexp( \mu_{i-1} u_{i-1}' + f_{i-1} ) > \lexp (f_{i-1})$.
Define $u_i = \mu_{i-1}u_{i-1}'+u_{i-1}$ and $q_i(x) = q_{i-1}(x)$. If we  define $f_i = u_i + q_i(a)c$, then 
$f_i = \mu_{i-1}u_{i-1}' + u_{i-1}  +  q_{i}(a) c = \mu_{i-1}u_{i-1}' + u_{i-1}  +  q_{i-1}(a) c = \mu_{i-1}u_{i-1}'   +  f_{i-1},$
and so $\lexp(f_i) > \lexp(f_{i-1})$. Moreover, the degree of $q_i(x)$ is greater than $\delta$.

\item[Case 2:] Suppose $\lexp(f_{i-1}) \not\in \lexp(V^*)$, in which case $\lexp(f_{i-1}) \in \lexp(W^*) \setminus \lexp(V^*) = S$, and so for 
 some index $j$, we have 
$$\lexp(f_{i-1}) = s_j = \lexp(w_j+p_j(a)c)$$
where $w_j \in V$ and  $\deg p_j(x) \le \delta < \deg q_{i-1}(x)$.  We now show that $f_{i-1}$ and $w_j+p_j(a)c$ are not $K$-scalar multiples of one another. Indeed, if $u_{i-1} + q_{i-1}(a)c = \lambda(w_j + p_j(a) c)$ for some $\lambda \in K$, then $(q_{i-1}(a) - \lambda p_j(a))c = \lambda w_j-u_{i-1}\in V$, which is impossible since $a$ is transcendental over $K$ and $c \not\in A^{-1}V$.
Therefore,  there exists $\lambda \in K$ such that $\lexp(u_{i-1} + q_{i-1}(a)c+ \lambda(w_j + p_j(a) c)) > \lexp
(u_{i-1} + q_{i-1}(a)c).$
If we define $f_i = u_i + q_i(a)c$ where
$u_i = u_{i-1} + \lambda w_j$ and $q_i(x) = q_{i-1}(x) + \lambda p_j(x)$, it follows that $\lexp(f_i) > \lexp(f_{i-1})$.
Moreover, 
since $\deg q_{i-1} > \delta \ge \deg p_j$ and $q_i = q_{i-1} + \lambda p_j$, it follows that $\deg q_i = \deg q_{i-1} >\delta$.
\end{enumerate}
\end{proof}

\section{Valuations on  Polynomial Rings in Two Variables}\label{valpoly}

We now take the results from Section \ref{section:growth} and apply them to polynomial rings in two variables. Throughout this section, $\lexp$ will be a $K$-valuation on $K(x,y)$ such that $\lexp(K[x,y]^*)$ is reversely well ordered.

\begin{definition}\label{Lambda}
For each $i \in \Z_{\ge 0}$, we define  
\begin{equation*}\label{def:Lambdai}
\V_i = \{ f \in K[x,y]  \mid \deg_yf \le i\}.
\end{equation*}
\end{definition}

In the sequel, for  any  $\alpha \in \lexp(K(x,y)^*)$,  define $\overline{\alpha}$ to be its  image  in $\lexp(K(x,y)^*)/\lexp(\V_0^*)$. Similarly, when $S \subseteq \lexp(K(x,y)^*)$,  define $\overline{S}$ to be its  image in $\lexp(K(x,y)^*)/\lexp(\V_0^*)$.

In general, when $\lexp$ is a $K$-valuation on $K(x,y)$, Lemma
\ref{prop:digging-card} can be used to show that the quotient
$\lexp(\V_n^*)/\lexp(\V_{0}^*)$ has cardinality at most $n+1$.  However, we will see in the next lemma that when
$\lexp(K[x,y]^*)$ is reversely well ordered, this bound is tight.

\begin{lemma}\label{lemma:Pgradual}
Define $f_0 = 1$. There exists $f_i  \in K[x,y]$ with
$\deg_y f_i= i$
for
each $i \in \Z_{\ge 0}$
such
that the following  conditions hold:
\begin{enumerate}
 \item[(i)] $\overline{\lexp(K[x,y]^*)}  = \{
\overline{\lexp(f_i)} \mid i \in \Z_{\ge 0} \}$;
 \item[(ii)] $\overline{\lexp(\V_\ell^*)}  = \{
\overline{\lexp(f_i)} \mid
0 \le i \le \ell \}$ for any $\ell \in \Z_{\ge 0}$;
 \item[(iii)] 
 for $i \not = j$, we have $\overline{\lexp(f_i)} \neq\overline{\lexp(f_j)}$.
\end{enumerate}
\end{lemma}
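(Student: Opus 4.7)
The plan is to prove (ii) by induction on $\ell$ and then derive (i) and (iii) as easy consequences. The base case $\ell = 0$ is immediate, since $V_0 = K[x]$ gives $\overline{v(V_0^*)} = \{\overline{0}\} = \{\overline{v(f_0)}\}$ for $f_0 = 1$.

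For the inductive step, suppose $f_0, \ldots, f_\ell$ have been chosen satisfying (ii) at level $\ell$. I would apply Theorem \ref{theorem:quotientplusone} with $A = V_0 = K[x]$, $V = V_\ell$, and $c = y^{\ell+1}$; since $V + A c = V_\ell + K[x]\, y^{\ell+1} = V_{\ell+1}$, the theorem guarantees exactly one class $\gamma \in \overline{v(V_{\ell+1}^*)} \setminus \overline{v(V_\ell^*)}$. Choosing any $f_{\ell+1} \in V_{\ell+1}$ with $\overline{v(f_{\ell+1})} = \gamma$ automatically forces $f_{\ell+1} \notin V_\ell$ (else its image would already lie in $\overline{v(V_\ell^*)}$), and hence $\deg_y f_{\ell+1} = \ell+1$, as required. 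The hypotheses of Theorem \ref{theorem:quotientplusone} are verified as follows: $V_\ell$ is generated as a $V_0$-module by $1, y, \ldots, y^\ell$; $v|_{V_0}$ is discrete of rank 1 because, whenever $v$ is nontrivial on $K[x]$, the reverse-well-ordering of $v(K[x]^*)$ together with $K$-triviality of $v$ and the strong triangle inequality forces $v(p) = (\deg p)\, v(x)$ with $v(x) < 0$, so the value group of $v|_{K(x)}$ is $\Z\, v(x) \cong \Z$; $v(V_0^*)$ is nonpositive since $v(K[x,y]^*)$ is; $y^{\ell+1} \notin V_0^{-1} V_\ell$ by a $y$-degree argument (any such equality $y^{\ell+1} = h/p(x)$ with $h \in V_\ell$ would give $p(x)\, y^{\ell+1} = h$, forcing $\deg_y h \ge \ell+1$); and $v(V_{\ell+1}^*)$ is reversely well ordered as a subset of $v(K[x,y]^*)$.

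Part (i) then follows from (ii) by taking the union over $\ell$ and using $K[x,y] = \bigcup_\ell V_\ell$. Part (iii) is immediate from the construction: for $j > i$, $\overline{v(f_j)}$ lies in $\overline{v(V_j^*)} \setminus \overline{v(V_{j-1}^*)}$ while $\overline{v(f_i)} \in \overline{v(V_{j-1}^*)}$, so the two classes are distinct. The main point requiring care is reconciling the ``finite vector space extension'' hypothesis of Section \ref{section:growth}: here $V_\ell / V_0$ is infinite-dimensional as a $K$-vector space, yet $V_\ell$ is finitely generated as a $V_0$-module by only $\ell+1$ elements, which is what the arguments underlying Lemma \ref{prop:digging-card}, Lemma \ref{lemma:finite-difference}, and hence Theorem \ref{theorem:quotientplusone} actually exploit. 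A secondary, degenerate situation is $v|_{K[x]} \equiv 0$ with $v$ nontrivial on $K[x,y]$; this is handled separately and trivially by the direct choice $f_i = y^i$, since then $v(y^i) = i\, v(y)$ are distinct nonpositive values and $v(V_\ell^*) = \{i\, v(y) : 0 \le i \le \ell\}$.
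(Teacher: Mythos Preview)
Your proof is correct and takes essentially the same approach as the paper: both apply Theorem~\ref{theorem:quotientplusone} with $A=V_0=K[x]$, $V=V_{i-1}$, $c=y^i$ to conclude that $\overline{\lexp(V_i^*)}\setminus\overline{\lexp(V_{i-1}^*)}$ is a singleton, and then extract $f_i$ from that singleton. Your extra care in checking the hypotheses is well placed---in particular your remark that the ``finite vector space extension'' condition in Section~\ref{section:growth} must be read as finite generation of $V$ as an $A$-module (which is what the proofs of Lemmas~\ref{prop:digging-card} and~\ref{lemma:finite-difference} actually use), since $\dim_K V_\ell/K[x]=\infty$; note also that your degenerate case $\lexp|_{K[x]}\equiv 0$ cannot arise under the standing hypothesis, because property~(iv) applied to $x$ and $1$ would produce $\lambda\in K$ with $\lexp(x-\lambda)>0$, contradicting nonpositivity of $\lexp(K[x,y]^*)$.
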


\begin{proof}
If we set $A=V_0=K[x], V= V_{i-1} =  \{ f \in K[x,y] \mid \deg_y f_i \le i-1\}$, $L=K(x,y)$ and $c = y^i$, then by Theorem
\ref{theorem:quotientplusone}, it follows that  $\overline{\lexp(\V_i^*)}
\setminus \overline{\lexp(\V_{i-1}^*)}$ is a singleton.
Therefore,
 $\overline{\lexp(\V_i^*)}
\setminus \overline{\lexp(\V_{i-1}^*)} = \{ \overline{\lexp(f_i)} \}$
for some
$f_i \in K[x,y]$ such that $\deg_y f_i = i$.
Thus,
\begin{equation}
\overline{\lexp(K[x,y]^*)} =  \bigcup_{i \in \Z_{\ge 0}}
 \overline{\lexp(\V_i^*)}  = \{ \overline{\lexp(f_i)} \mid i \in \Z_{\ge 0} \}
\end{equation}
and
\begin{equation}
\overline{\lexp(\V_\ell^*)} =   \{ \overline{\lexp(f_i)} \mid 0 \le i 
\le \ell \}.
\end{equation}
Since
 $\overline{\lexp(\V_i^*)}
\setminus \overline{\lexp(\V_{i-1}^*)}$ is a singleton set for each index $i$, we have that
$\overline{\lexp(f_i)} \neq\overline{\lexp(f_j)}$ whenever $i \neq j$.
\end{proof}

\begin{lemma}\label{lemma:gradual-alt}
Suppose $g_i \in K[x,y]^*$ such that
$\deg_y g_i = i$ for $i \in
\Z_{\ge 0}$.
If $\overline{\lexp(g_i)} \neq \overline{\lexp(g_j)}$ for all $i\neq j$, then
\begin{equation}
\overline{\lexp(K[x,y]^*)} = \{ \overline{\lexp(g_i)} \mid i
\in \Z_{\ge 0} \}.
\end{equation}
\end{lemma}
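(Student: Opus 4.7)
The plan is to piggyback entirely on Lemma \ref{lemma:Pgradual}, using a cardinality-counting argument. Lemma \ref{lemma:Pgradual} already established that there exist distinguished elements $f_0,f_1,\dots$ with $\deg_y f_i = i$ such that $\overline{\lexp(V_\ell^*)} = \{\overline{\lexp(f_i)} : 0 \le i \le \ell\}$, and that these classes are pairwise distinct. In particular, the quotient $\overline{\lexp(V_\ell^*)}$ has cardinality exactly $\ell+1$. This is the only fact I will need from the previous lemma; I do not need the specific $f_i$'s at all.

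First, I would observe that since $\deg_y g_i = i$, we have $g_i \in V_i$, and hence $g_0,g_1,\dots,g_\ell$ all lie in $V_\ell$. Their images $\overline{\lexp(g_0)},\dots,\overline{\lexp(g_\ell)}$ therefore lie in $\overline{\lexp(V_\ell^*)}$, and by the hypothesis they are pairwise distinct, so they contribute $\ell+1$ distinct classes.

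Next, since $\#\overline{\lexp(V_\ell^*)} = \ell+1$ by Lemma \ref{lemma:Pgradual}, the inclusion
\[
\{\overline{\lexp(g_i)} : 0 \le i \le \ell\} \subseteq \overline{\lexp(V_\ell^*)}
\]
of a set of size $\ell+1$ into a set of size $\ell+1$ must actually be an equality. Finally, since $K[x,y]^* = \bigcup_{\ell \ge 0} V_\ell^*$, I would take unions over $\ell \in \Z_{\ge 0}$ to conclude
\[
\overline{\lexp(K[x,y]^*)} = \bigcup_{\ell \ge 0} \overline{\lexp(V_\ell^*)} = \{\overline{\lexp(g_i)} : i \in \Z_{\ge 0}\}.
\]

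There is no real obstacle here; the whole argument is a pigeonhole, and all the heavy lifting was done in establishing the tight cardinality bound of Theorem \ref{theorem:quotientplusone} and Lemma \ref{lemma:Pgradual}. The only subtle point worth flagging is that we use the $f_i$'s of Lemma \ref{lemma:Pgradual} purely as a device to pin down $\#\overline{\lexp(V_\ell^*)}$; the conclusion is then that \emph{any} sequence of $y$-degree-incrementing polynomials with pairwise distinct $\overline{\lexp}$-values automatically exhausts $\overline{\lexp(V_\ell^*)}$ by a counting argument, which is the content being claimed.
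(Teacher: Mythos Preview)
Your proposal is correct and is essentially identical to the paper's own proof: both invoke Lemma~\ref{lemma:Pgradual} to get $\#\overline{\lexp(V_\ell^*)} = \ell+1$, observe that the $\overline{\lexp(g_i)}$ for $0 \le i \le \ell$ give $\ell+1$ distinct elements of this set, conclude equality by pigeonhole, and then take the union over $\ell$.
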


\begin{proof}
We know from Lemma \ref{lemma:Pgradual} that $\#
\overline{\lexp(\V_i^*)}
= i+1$, and since $\overline{\lexp(g_0)},  \dots,
\overline{\lexp(g_i)}$ are distinct elements of 
$\overline{\lexp(\V_i^*)}$,
it follows that
 $$\overline{\lexp(\V_i^*)} = \{ \overline{\lexp(g_0)},
\overline{\lexp(g_1)}, \dots,
\overline{\lexp(g_i)} \},$$ and so
$$\overline{\lexp(K[x,y]^*)} =
\overline{\bigcup_{i=0}^\infty \lexp(\V_i^*)}
=\bigcup_{i=0}^\infty \overline{\lexp(\V_i^*)}
=
\{ \overline{\lexp(g_i)} \mid i
\in \Z_{\ge 0} \}.$$
\end{proof}

\begin{definition}
We say that $\alpha\neq 0$ is an {\bf indivisible element}  of the group $\Gamma$ if there does not exist an integer $n \ge 2$ and $\gamma \in \Gamma$ such that $\alpha = n \gamma$.
We say that $\alpha, \beta \in \Gamma^*$ are {\bf commensurable} if there exist $m,n \in \Z^*$ such that $m \alpha = n \beta$.
\end{definition}

Going forward, we focus on the case when $\lexp(K(x,y)^*) = \Z \oplus \Z$.

\begin{prop}\label{prop:alphabetadrop}
Suppose  $\lexp(K(x,y)^*) = \Z \oplus \Z$, and let $\alpha$ be an indivisible element of $\Z \oplus \Z$ such that
$\lexp(x), \lexp(y) \in \Z_{\ge 0}\alpha$. If $\lexp(x) = m \alpha$, where $m \in \Z_{\ge 0}$, then the following statements  hold:
\begin{enumerate}

\item[(i)] $\lexp(\V_{m-1}^*) \subseteq \Z_{\ge 0} \alpha$;

\item[(ii)] $\lexp(\V_{m}^*) \not\subseteq \Z_{\ge 0}\alpha$.

\end{enumerate}

\end{prop}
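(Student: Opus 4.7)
My plan is to prove (i) first and derive (ii) quickly from it via Lemma \ref{lemma:Pgradual}. Write $\lexp(y) = k\alpha$ with $k \in \Z_{\ge 0}$; each monomial $x^jy^i$ with $0 \le i \le m-1$ satisfies $\lexp(x^jy^i) = (jm+ik)\alpha \in \Z_{\ge 0}\alpha$, and for $p_i(x) \in K[x]^*$ we have $\lexp(p_i(x)y^i) = (\deg(p_i)m + ik)\alpha$.

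\textbf{For (i):} When $\gcd(m,k)=1$, all monomials of $\V_{m-1}$ have pairwise distinct valuations---an equality $(j-j')m = (i'-i)k$ with $0 \le i,i' \le m-1$ would force $m \mid (i'-i)$ and hence $i=i'$, $j=j'$---so for any $f = \sum_{i=0}^{m-1} p_i(x) y^i \in \V_{m-1}^*$, the strong triangle inequality gives $\lexp(f) = \min_i \lexp(p_i(x)y^i) \in \Z_{\ge 0}\alpha$ at once. When $d := \gcd(m,k) \ge 2$, cancellations among equal-valued terms can occur, and I would argue by contradiction. Suppose some $f \in \V_{m-1}^*$ had $\lexp(f) = \gamma \notin \Z\alpha$. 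The subgroup $H := \langle m\alpha, k\alpha, \gamma \rangle = d\Z\alpha + \Z\gamma$ has index at least $d \ge 2$ in $\Z\oplus\Z$ (in a basis $\{\alpha,\beta\}$ of $\Z\oplus\Z$ adapted to the indivisible $\alpha$, one computes the index as $d \cdot |b|$ where $\gamma = (a,b)$ with $b \ne 0$), so $H \subsetneq \Z\oplus\Z$. By induction on $\deg_y$, with cancellations at each level controlled via factoring $y$-polynomials over the algebraic closure of $K(x)$, one shows $\lexp(K[x,y]^*) \subseteq H$; then $\lexp(K(x,y)^*) = \lexp(K[x,y]^*) - \lexp(K[x,y]^*) \subseteq H \subsetneq \Z\oplus\Z$ contradicts the hypothesis $\lexp(K(x,y)^*) = \Z\oplus\Z$. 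Thus $\lexp(\V_{m-1}^*) \subseteq \Z\alpha$, and nonpositivity of $\lexp$ on $K[x,y]^*$ (from the standing hypothesis of Section \ref{valpoly} that $\lexp(K[x,y]^*)$ is reversely well ordered) upgrades this to $\lexp(\V_{m-1}^*) \subseteq \Z_{\ge 0}\alpha$.

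\textbf{For (ii):} I would apply Lemma \ref{lemma:Pgradual} with $A = \V_0$: the quotient monoid $\lexp(\V_m^*)/\lexp(\V_0^*)$ has exactly $m+1$ distinct elements. Since $\lexp(\V_0^*) = \Z_{\ge 0}m\alpha$, the monoid quotient $\Z_{\ge 0}\alpha / \Z_{\ge 0}m\alpha$ is isomorphic to $\Z/m\Z$ with only $m$ elements. An inclusion $\lexp(\V_m^*) \subseteq \Z_{\ge 0}\alpha$ would induce an injection $\lexp(\V_m^*)/\lexp(\V_0^*) \hookrightarrow \Z_{\ge 0}\alpha/\lexp(\V_0^*)$ from an $(m+1)$-element set into an $m$-element set, which is impossible. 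Hence $\lexp(\V_m^*) \not\subseteq \Z_{\ge 0}\alpha$.

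\textbf{Main obstacle.} The delicate step will be the propagation argument in part (i) when $d \ge 2$: confirming that cancellations at every $\deg_y$ level keep $\lexp$-values inside the proper subgroup $H$. The natural route---factoring $f \in K[x,y]$ as $q(x)\prod_s(y - c_s(x))$ over $\overline{K(x)}$ and tracking each linear factor's $\lexp$ under a fixed extension of $\lexp$---requires care to ensure that the extended values remain in an appropriate subgroup of the extended value group and that the conclusion descends correctly to $K$. Part (ii) is then essentially a cardinality argument once part (i) is in hand.
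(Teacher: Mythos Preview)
Your argument for (ii) is essentially the paper's: both count $\#\bigl(\lexp(\V_m^*)/\lexp(\V_0^*)\bigr)=m+1$ via Lemma~\ref{lemma:Pgradual} and observe that $\Z_{\ge 0}\alpha$ supplies only $m$ distinct classes modulo $\lexp(\V_0^*)=m\Z_{\ge 0}\alpha$. Your $\gcd(m,k)=1$ case of (i) is also correct and pleasantly direct.

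The gap is in the case $d=\gcd(m,k)\ge 2$, and it is a genuine one rather than a matter of added care. The intermediate claim you need, $\lexp(K[x,y]^*)\subseteq H:=d\Z\alpha+\Z\gamma$, can fail in the very hypothetical you are trying to contradict. Although every monomial $x^ay^b$ has value in $d\Z\alpha$, cancellations can produce polynomial values lying in $\Z\alpha\setminus d\Z\alpha$. Concretely, take $m=4$, $k=2$, $d=2$: for $j<\ell$ the $f_j$ of Lemma~\ref{lemma:Pgradual} satisfy $\lexp(f_j)=r_j\alpha$ with the $r_j$ pairwise distinct modulo $4$; since every element of $\V_1^*$ has value in $2\Z_{\ge 0}\alpha$ one gets $r_0=0$, $r_1=2$, so if $\ell=3$ then $r_2\in\{1,3\}$ is odd. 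But $H\cap\Z\alpha=d\Z\alpha$ (using that $\alpha$ and $\gamma$ are not commensurable), so $\lexp(f_2)\notin H$ and the desired containment fails---no contradiction is produced. The factoring route over $\overline{K(x)}$ does not rescue this: the extended values $\hat v(y-c_s)$ live in a possibly larger value group with no control tying them to $H$, and nothing forces their sum to land back in $H$.

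The paper sidesteps this by not aiming at a subgroup containment at all. With $\ell$ minimal such that $\beta:=\lexp(f_\ell)\notin\Z\alpha$, it uses Lemma~\ref{lemma:gradual-alt} to show that the products $g_{i\ell+j}=f_\ell^{\,i}f_j$ ($0\le j<\ell$) already represent every class in $\overline{\lexp(K[x,y]^*)}$. Since $\ell<m$ leaves some residue $\lambda\in\{0,\dots,m-1\}\setminus\{r_0,\dots,r_{\ell-1}\}$ unused, one writes $\lambda\alpha\in\lexp(K(x,y)^*)$ as a difference $\lexp(h_1)-\lexp(h_2)$, expresses each $\overline{\lexp(h_s)}$ through the $g$'s, and---using that $\alpha,\beta$ are not commensurable---forces $\lambda=r_k$ for some $k<\ell$, a contradiction. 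This residue-counting argument (not a subgroup-index argument) is what is needed to close your $d\ge 2$ case.
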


\begin{proof}
Since $\lexp(x) = m\alpha$, it follows that $\lexp(\V_0^*) =  \Z_{\ge 0} m\alpha$. Moreover,  $\alpha \in \lexp(K(x,y)^*)$,
and so
\begin{equation}\label{Zalpha}
\overline{\Z \alpha}  = \{ \overline{i \alpha} \mid i \in \Z\} = 
\{ \overline{0}, \overline{\alpha}, \overline{2\alpha}, \dots, \overline{(m-1) \alpha}\}.
\end{equation}
By Lemma
\ref{lemma:Pgradual}, \begin{equation}
\overline{\lexp(K[x,y]^*)} = \{ \overline{\lexp(f_i)} \mid i \in \Z_{\ge 0} \},
\end{equation}
\begin{equation}\label{(m-1)reps}
\overline{\lexp(\V_{i-1}^*)}  = \{
\overline{\lexp(f_0)},
\overline{\lexp(f_1)}, \dots,
\overline{\lexp(f_{i-1})}
 \},
\end{equation}
and
\begin{equation}\label{diff:moduli}
\overline{\lexp(f_i)}
\neq
\overline{\lexp(f_j)} \ {\text{for $i \neq j$}},
\end{equation}
where  $f_i \in K[x,y]$ for each $i\in \Z_{\ge 0}$.
Given $\gamma_1, \gamma_2 \in \Z \oplus \Z$, we have that $\overline{\gamma_1}  =
\overline{\gamma_2}$ if and only if $\gamma_1 - \gamma_2 \in \Z m\alpha$.

We now demonstrate (i).
Let $\ell$ be the smallest integer such that  $f_\ell  \in K[x,y]$ 
with
$\deg_y f_\ell = \ell$
 where $\lexp(f_\ell) \not \in \Z_{\ge 0}\alpha $.  
Defining $\beta = \lexp(f_\ell)$,   it is clear that $\alpha$ and $\beta$ are not commensurable since $\alpha$ is indivisible.

Suppose, for contradiction, that $\ell < m$.
For each $i$, we have $f_i \in  K[x,y]$  with
$\deg_y f_i = i$,
and so the minimality of $\ell$ guarantees that $\lexp(f_i) \in \Z_{\ge 0}\alpha$ for $0 \le i
\le \ell -1$, in which case  we can write
\begin{equation}
\lexp(f_i) = (q_i m + r_i) \alpha,
\end{equation}
where $q_i, r_i \in \Z_{\ge 0}$ such that $0 \le r_i \le m-1$.  For $i, j \in \Z_{\ge 0}$ 
with $0 \le j \le \ell -1$, define
$g_{i \ell  + j} = f_\ell^i f_j$.  Note that $\deg_y (g_{i \ell + j}) = i
\deg_y(f_\ell) + \deg_y(f_j) = i \ell + j$, and so
\begin{equation}\label{eq:degy_prep}
\deg_y(g_s) = s
\end{equation}
for all $s \in \Z_{\ge 0}$.

Moreover,
\begin{equation}\label{eq:gij}
\lexp(g_{i\ell + j}) = i \lexp(f_\ell) + \lexp(f_j) = i \beta + q_jm \alpha + r_j
\alpha,
\end{equation} and so
$\overline{\lexp(g_{i \ell + j})} = \overline{i \beta + r_j \alpha}$.

Suppose for some indices $i_1, i_2, j_1, j_2$ with $0 \le j_1, j_2 \le \ell-1$, we have
$\overline{\lexp(g_{i_1\ell+j_1})} = \overline{\lexp(g_{i_2\ell+j_2})}$, in which
case $\overline{i_1 \beta + r_{j_1}\alpha } =\overline{i_2 \beta + r_{j_2}\alpha
}$.  Therefore, $i_1 \beta + r_{j_1} \alpha = i_2 \beta + r_{j_2} \alpha + zm
\alpha$ for some $z \in \Z$. Thus, $(i_1 - i_2) \beta = (r_{j_2} - r_{j_1} + zm)
\alpha$, and since $\alpha$ and $\beta$ are not commensurable,  $i_1 = i_2$. Moreover,
since $\overline{\lexp(g_{i_1\ell +j_1})}
= \overline{\lexp(g_{i_2\ell +j_2})}$,
it follows from (\ref{eq:gij}) that
$\overline{i_1\lexp(f_\ell)+ f(j_1)} = 
\overline{i_2\lexp(f_\ell)+ f(j_2)}$, and since
$i_1=i_2$, we have
$\overline{\lexp(f_{j_1})} = \overline{\lexp(f_{j_2})}$.
Thus by (\ref{diff:moduli}), 
we can conclude $j_1=j_2$, and so 
$i_1 \ell + j_1 = i_2 \ell + j_2$. 
Thus, we have shown
\begin{equation}\label{eq:gs_gt}
\overline{\lexp(g_s)} \neq \overline{\lexp(g_t)}
\end{equation}
whenever $s\neq t$.
Using this in conjunction with (\ref{eq:degy_prep}), we have by
Lemma \ref{lemma:gradual-alt} that \begin{equation}\label{eq:gs_dom}
\overline{\lexp(K[x,y]^*)} = \{ \overline{\lexp(g_i)} \mid i \in \Z_{\ge 0} \}.
\end{equation}

Since $\ell < m$, there exists $\lambda \in \{ 0, \dots, m-1 \} \setminus \{r_0,
\dots, r_{\ell-1}\}$.  Write $\lambda = qm+r$ where $0 \le r \le m-1$. Since
$\lambda \alpha  \in \Z \oplus \Z= \lexp(K(x,y)^*)$, there exist $h_1, h_2 \in K[x,y]$ such that
$\lambda \alpha = \lexp(h_1/h_2) = \lexp(h_1) - \lexp(h_2)$.
By (\ref{eq:gs_dom}), there exist indices $i_1,j_1, i_2, j_2$ 
with $0 \le j_1, j_2 \le \ell-1$
such that
$\overline{\lexp(h_1)} = \overline{\lexp(g_{i_1 \ell + j_1})}$ and
 $\overline{\lexp(h_2)} = \overline{\lexp(g_{i_2 \ell + j_2})}$, in which case $\overline{\lexp(h_1)} = \overline{i_1 \beta + r_{j_1}\alpha}$ and $\overline{\lexp(h_2)} = \overline{i_2 \beta + r_{j_2} \alpha}$.
Thus, $\lexp(h_1) = i_1 \beta + r_{j_1} \alpha + s_1 m \alpha$ and
$\lexp(h_2) = i_2 \beta + r_{j_2} \alpha + s_2 m \alpha$ for some $s_1, s_2 \in \Z$.
Thus,
\begin{equation}
\lambda \alpha = (i_1 - i_2) \beta + (r_{j_1} - r_{j_2}) \alpha + (s_1 - s_2) m \alpha.
\end{equation}
Since $\alpha$ and $\beta$ are not commensurable, it follows that
$i_1 = i_2$ and so
\begin{equation}
\overline{\lambda \alpha} = \overline{ (r_{j_1} - r_{j_2})
\alpha }.
\end{equation}
Now, whenever $i=0$ and $0 \le j \le \ell-1$, we have by (\ref{eq:gij}) that 
$\overline{\lexp(g_{i\ell+j})} = \overline{r_j \alpha}$, and so
 \begin{eqnarray*}
\overline{\lexp(g_{j_1}^{} g_{j_2}^{m-1})} &= &  \overline{ \lexp(g_{j_1}) +
(m-1) \lexp(g_{j_2})}\\
& = &  \overline{r_{j_1}\alpha + (m-1) r_{j_2} \alpha} \\
& = & \overline{m  r_{j_2} \alpha + (r_{j_1} - r_{j_2}) \alpha }\\
& = & \overline{(r_{j_1} - r_{j_2}) \alpha }.
 \end{eqnarray*}
Therefore, $\overline{\lambda \alpha} = \overline{\lexp(g_{j_1}^{}
g_{j_2}^{m-1})}$, and so by (\ref{eq:gs_dom}), we have $\overline{\lambda \alpha} = \overline{\lexp(g_k)}$
for some $k \in \Z_{\ge 0}$.
Now, by (\ref{eq:gij}), we know that $\lexp(g_s) \in \Z_{\ge 0}\alpha$ if and only if
$s \le \ell -1$.
Thus, $k \le \ell -1$, and so $\overline{\lambda \alpha} = \overline{\lexp(g_k)} =
\overline{r_k \alpha}$.  Since $0 \le \lambda \le m-1$ and $0 \le r_k \le m-1$, it
follows from the fact that $\lexp(\V_0^*) =  \Z_{\ge 0} m\alpha$ that $\lambda = r_k$, which contradicts the assumption that
$\lambda \in \{ 0, \dots, m-1\} \setminus \{r_0, \dots, r_{\ell -1}\}$.
Therefore, $\ell \ge m$, and so part (i) follows.

Given that $\lexp(\V_{m-1}^*) \subset \Z_{\ge 0} \alpha$, 
it follows from (\ref{(m-1)reps}) and (\ref{diff:moduli})
together that $\overline{\lexp(\V_{m-1}^*)}$ has cardinality $m$, and so
\begin{equation}\label{level(m-1)}
 \overline{\lexp(\V_{m-1}^*)} = \{ \overline{0}, \overline{\alpha}, \overline{2\alpha}, \dots, \overline{(m-1) \alpha}\}.
\end{equation}
Suppose, for contradiction, 
$\lexp(\V_m^*) \subset \Z_{\ge 0}\alpha$. Therefore, $\lexp(f_m) \in \Z_{\ge 0} \alpha$, in which case by (\ref{Zalpha}), $\overline {\lexp(f_m)} = \overline{i \alpha}$ for some $0 \le i \le m-1$. However, we have just seen by (\ref{level(m-1)}) that $\overline{i \alpha} = \overline{\lexp(f_j)}$ for some index $0 \le j \le m-1$, and so $\overline{\lexp(f_m)} = \overline{\lexp(f_j)}$, which contradicts  
(\ref{diff:moduli}).
\end{proof}

Given two polynomials $f,w \in K(x)[y]$, we will often need to write $f$ as  an expansion in $w$, which is  accomplished by taking $f$ and iteratively dividing by $w$ to obtain a sequence of quotients and remainders. This expansion is made precise by the following result, which we state without proof.

\begin{lemma}\label{decompose}
Let $w \in K(x)[y]$, and define $m = \deg_y w$. Given $f \in K(x)[y]$, there is an index $\ell$
and unique  $f_{i,j} \in K(x)$
such that  
\begin{equation}\label{w-expansion}
f= \sum_{i=0}^\ell \sum_{j=0}^{m-1} f_{i,j} y^j w^i.
\end{equation}
We call (\ref{w-expansion}) the $w$-{\bf expansion of} $f$.
Note that if $\ell$ is larger than necessary, the additional coefficients  $f_{i,j}$ are all zero. We say ``the expansion" instead of ``an expansion" even though the expansion is unique only up to the choice of $\ell$ and the addition of  coefficients that are just ``0". We call  each $f_{i,j}y^jw^i$ a {\bf term of the $w$-expansion}.
\end{lemma}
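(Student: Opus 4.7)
The plan is to obtain the expansion by iterated Euclidean division in $K(x)[y]$ using $w$ as the divisor, and then to establish uniqueness via a degree comparison in $y$.

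For existence, since $K(x)[y]$ is a Euclidean domain with respect to $\deg_y$ and the leading $y$-coefficient of $w$ is a unit in $K(x)$, we may perform ordinary polynomial division by $w$. Write $f = q_0 w + r_0$ with $\deg_y r_0 < m$, then $q_0 = q_1 w + r_1$ with $\deg_y r_1 < m$, and iterate. Each step strictly reduces the $y$-degree of the quotient by at least $m$, so after $\ell+1 \le \lceil (\deg_y f + 1)/m \rceil$ steps we reach $q_{\ell+1} = 0$. Back-substitution gives $f = \sum_{i=0}^{\ell} r_i w^i$, and since each $r_i \in K(x)[y]$ has $y$-degree at most $m-1$, we can expand $r_i = \sum_{j=0}^{m-1} f_{i,j} y^j$ with $f_{i,j} \in K(x)$, producing (\ref{w-expansion}).

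For uniqueness, it suffices to show that if $\sum_{i=0}^{\ell} p_i(y) w^i = 0$, where each $p_i \in K(x)[y]$ satisfies $\deg_y p_i \le m-1$, then every $p_i$ is zero. Assume for contradiction that some $p_i \ne 0$, and let $i_0$ be the largest such index. Because $K(x)[y]$ is a domain with $\deg_y(w) = m$, the $y$-degree of $p_{i_0} w^{i_0}$ equals $\deg_y p_{i_0} + i_0 m \ge i_0 m$. On the other hand, for every $i < i_0$ we have $\deg_y(p_i w^i) \le (m-1) + im \le i_0 m - 1$. Hence the leading $y$-term of $p_{i_0} w^{i_0}$ cannot be cancelled by the lower-index contributions, contradicting the vanishing of the sum. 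Applying this to the difference of two expansions of the same $f$ (after padding with zero coefficients so both share a common upper limit $\ell$) yields the uniqueness of the $f_{i,j}$.

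The argument is entirely routine; the only real subtlety is notational bookkeeping. Uniqueness is a statement about the coefficients $f_{i,j}$, not about $\ell$, which is precisely why the statement of the lemma clarifies that padding with additional zero coefficients is harmless. No deeper obstacle is anticipated.
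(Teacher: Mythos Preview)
Your argument is correct. The paper explicitly states this lemma without proof, so there is nothing to compare against; your iterated Euclidean division for existence and leading-degree-in-$y$ argument for uniqueness constitute the standard (and entirely adequate) justification.
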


When the value group is $\Z \oplus \Z$, we can use these expansions to prove a result about $\lexp(K[x,y]^*)$ under 
the assumption it is reversely well ordered. Specifically, we have the following proposition.

\begin{prop}\label{prop:Zalphabeta}
Suppose  $\lexp(K(x,y)^*) = \Z \oplus \Z$ and that $\lexp(K[x,y]^*)$ is reversely well ordered. Suppose further that $\alpha$ is an indivisible element of $\Z \oplus \Z$ such that
$\lexp(x) = m\alpha$ and $\lexp(y)  =  n\alpha$, where $m,n\in \Z_{\ge 0}$.
If  $w \in K[x,y]^*$ such
that $\deg_y w= m$
where $\beta = \lexp(w)$ and $\alpha$ are not commensurable,
then
$$\lexp((K(x)[y])^*) \subset \alpha\Z  + \beta\Z_{\ge 0} .$$
\end{prop}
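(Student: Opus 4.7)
The plan is to run the $w$-expansion from Lemma \ref{decompose} on $f$ and analyze each ``coefficient block'' grouped by powers of $w$. Given $f \in K(x)[y]^*$, write $f = \sum_{i=0}^\ell g_i w^i$, where $g_i = \sum_{j=0}^{m-1} f_{i,j} y^j \in K(x)[y]$ has $y$-degree strictly less than $m$, and only finitely many $g_i$ are nonzero. The conclusion will follow once I establish that (a) each nonzero $g_i$ satisfies $\lexp(g_i) \in \Z\alpha$, and (b) the valuations $\lexp(g_i w^i)$ for distinct $i$ with $g_i \neq 0$ are pairwise distinct.

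For (a), I would clear denominators: write $g_i = G_i/h_i$ with $G_i \in K[x,y]$ of $y$-degree at most $m-1$ and $h_i \in K[x]^*$, so that $G_i \in \V_{m-1}^*$. Proposition \ref{prop:alphabetadrop}(i) then gives $\lexp(G_i) \in \Z_{\ge 0}\alpha$, and since $\lexp(x) = m\alpha$ and $\lexp$ is trivial on $K$, the standard computation yields $\lexp(h_i) \in \Z_{\ge 0} m\alpha \subseteq \Z\alpha$. Therefore $\lexp(g_i) = \lexp(G_i) - \lexp(h_i) \in \Z\alpha$, as desired.

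For (b), suppose $\lexp(g_{i_1} w^{i_1}) = \lexp(g_{i_2} w^{i_2})$ for distinct $i_1, i_2$. Rearranging gives $(i_1 - i_2)\beta = \lexp(g_{i_2}) - \lexp(g_{i_1}) \in \Z\alpha$, contradicting the incommensurability of $\alpha$ and $\beta$. Consequently, the minimum of $\{\lexp(g_i w^i) : g_i \neq 0\}$ is attained uniquely, so by the strong triangle inequality $\lexp(f) = \lexp(g_{i_0}) + i_0\beta$ for some $i_0 \ge 0$. Since $\lexp(g_{i_0}) \in \Z\alpha$, this lies in $\alpha\Z + \beta\Z_{\ge 0}$.

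I do not anticipate a serious obstacle; once one recognizes that the $w$-expansion reduces the question to $\V_{m-1}$-type coefficients (up to scalars in $K(x)$), Proposition \ref{prop:alphabetadrop}(i) and the incommensurability hypothesis each do exactly the job required, in steps (a) and (b) respectively. The only mild care is clearing denominators in step (a), since Proposition \ref{prop:alphabetadrop} is stated for polynomials rather than for coefficients in $K(x)$.
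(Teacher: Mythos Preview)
Your argument is correct and follows essentially the same route as the paper: write the $w$-expansion, use Proposition~\ref{prop:alphabetadrop}(i) on each coefficient block (after clearing denominators) to land in $\Z\alpha$, then use incommensurability to see the blocks $g_iw^i$ have pairwise distinct valuations so that the strong triangle inequality applies. The only cosmetic difference is that the paper clears all denominators at once with a single $h\in K[x]^*$, whereas you do it block by block; either way works.
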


\begin{proof}
Given  $f(x,y) \in K(x)[y]^*$,  consider the $w$-expansion
\begin{equation}
f= \sum_{i=0}^\ell \sum_{j=0}^{m-1} f_{i,j} y^j w^i.
\end{equation}
Simultaneously clear all the denominators by choosing $h \in K[x]^*$ so that
$hf_{i,j} \in K[x]$.
For each index $i$, since $0 \le j \le m-1$, we have by  Proposition~\ref{prop:alphabetadrop} that
$$ \lexp\left(\sum_{j=0}^{m-1} hf_{i,j} y^j\right)  = \lambda_i \alpha,$$
for some $\lambda_i \in \Z_{\ge 0}$.

  If
$\lexp\left(\sum_{j=0}^{m-1} hf_{i_1,j} y^jw^{i_1}\right) = \lexp\left(\sum_{j=0}^{m-1} hf_{i_2,j} y^jw^{i_2}\right)$ for some   pair of indices $i_1,i_2$,  we have $\lambda_{i_1} \alpha + {i_1} \beta = \lambda_{i_2}
\alpha + {i_2} \beta$, and so
\begin{equation}
 (\lambda_{i_1}  - \lambda_{i_2}) \alpha = (i_2-i_1) \beta,
\end{equation}
in which case $i_1=i_2$ since $\alpha$ and $\beta$ are not commensurable.

Therefore, whenever $i_1 \neq i_2$, we have 
$\lexp(\sum_{j=0}^{m-1} hf_{i_1,j} y^jw^{i_1}) \neq \lexp(\sum_{j=0}^{m-1} hf_{i_2,j} y^jw^{i_2})$, and so by the strong triangle inequality,
\begin{equation}
 \lexp( f ) = \min_{0 \le i \le \ell} \lexp\left(\sum_{j=0}^{m-1} hf_{i,j} y^jw^{i}\right) - \lexp(h) =  \min_{0 \le i \le \ell}
(\lambda_i \alpha + i \beta) -\lexp(h).
\end{equation}
The conclusion follows from the observation that $\lexp(h) \in \alpha\Z_{\ge 0} $.
\end{proof}




\section{Constructing Explicit Valuations}\label{section:construction}

Using the terminology introduced in Lemma~\ref{decompose}, we construct a special class of $K$-valuations on $K(x,y)$.
We begin by stating the following lemma without proof.

\begin{definition}\label{valonx}
The degree valuation $v_\infty: K(x) \to \Z \cup \{ \infty \}$ is a $K$-valuation on $K(x)$
given by $v_{\infty}(0) = \infty$ and
\begin{equation}
v_{\infty}\left(f/g \right) =  \deg_x(g) - \deg_x(f)
\end{equation}
for $f,g \in K[x]^*$.
\end{definition}

From this definition, the simple corollary below follows by considering polynomial division.

\begin{cor}\label{polyrat}
 $K$ is a field of representatives for the residue field
${\mathcal O}_{v_\infty}/{\mathcal M}_{v_\infty}$.
Moreover, given $g \in K(x)^*$, there exists a unique $f \in K[x]$ such that $v_{\infty}(f+g) >  0$.
\end{cor}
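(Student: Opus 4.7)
The plan is to prove the ``moreover'' part first by polynomial division, and then read off the residue-field statement as an essentially immediate consequence.

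For the existence half of the moreover clause, I would write $g = p/q$ with $p \in K[x]$ and $q \in K[x]^*$ and apply the Euclidean algorithm in $K[x]$ to divide $p$ by $-q$, producing $p = -fq + r$ with $f \in K[x]$ and either $r=0$ or $\deg_x r < \deg_x q$. Then $f + g = r/q$, and Definition~\ref{valonx} gives $v_\infty(f+g) = \deg_x q - \deg_x r > 0$, with the $v_\infty(0) = \infty$ convention covering the case $r = 0$. For uniqueness, if $f_1, f_2 \in K[x]$ both satisfy $v_\infty(f_i+g) > 0$, then axioms (iii) and (ii) of Definition~\ref{def:valuation} give
\[
v_\infty(f_1 - f_2) \ge \min\{v_\infty(f_1+g), v_\infty(f_2+g)\} > 0,
\]
but any nonzero $h \in K[x]$ has $v_\infty(h) = -\deg_x h \le 0$, forcing $f_1 = f_2$.

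For the residue-field statement, observe that $K \subseteq \mathcal{O}_{v_\infty}$ because $v_\infty$ vanishes on $K^*$, so composing the inclusion with the reduction map gives a ring homomorphism $\varphi: K \to \mathcal{O}_{v_\infty}/\mathcal{M}_{v_\infty}$. Injectivity is automatic since no nonzero constant lies in $\mathcal{M}_{v_\infty}$. For surjectivity, take $g \in \mathcal{O}_{v_\infty}$ and apply the moreover part to produce $f \in K[x]$ with $v_\infty(f+g) > 0$; since $v_\infty(g) \ge 0$, the strong triangle inequality gives $v_\infty(f) = v_\infty((f+g) - g) \ge 0$, which combined with $f \in K[x]$ forces $\deg_x f \le 0$, i.e., $f \in K$. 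Then $-f \in K$ represents the residue class of $g$.

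Because the entire argument reduces to the Euclidean algorithm in $K[x]$ plus the defining formula for $v_\infty$, there is no real obstacle; the only thing to watch is consistent sign/degree bookkeeping in the polynomial division step and the handling of $r=0$ via the $v_\infty(0) = \infty$ convention.
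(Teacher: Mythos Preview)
Your proposal is correct and follows exactly the approach the paper indicates: the paper states the corollary without proof, remarking only that it ``follows by considering polynomial division,'' and your argument via the Euclidean algorithm in $K[x]$ is precisely that. One cosmetic point: in the surjectivity step you invoke the \emph{strong} triangle inequality to get $v_\infty(f) \ge 0$, but you only use (and only need) the ordinary ultrametric inequality (iii) there; the conclusion is unaffected.
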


We now define a set of maps on $K(x,y)$ that serve as candidates for $K$-valuations. After constructing such maps in terms of multiple parameters, we prove some intermediate results that lead to Proposition \ref{homomorphism:powers}, which demonstrates that these maps are, indeed, $K$-valuations when the given parameters satisfy specific conditions.

\begin{definition}
\label{def:constructval}
Let $m, n$ be positive, relatively prime integers,
and let $w \in K(x)[y]$ be monic in  $y$ with  $\deg_y w = m$.
Let $\alpha, \beta$ be nonzero, indivisible elements of $\Z \oplus \Z$ such that are not commensurable.   We define  {\bf the map associated to $(m,n, w, \alpha, \beta)$} as
 $$\lexp: K(x)[y] \to \Z \oplus \Z \cup \{ \infty\}$$  by setting $\lexp(0) = \infty$, and
for $f \in K(x)[y]^*$, defining
 \begin{equation}\label{prop:lexpconstruction1}
\lexp(f) =  \min_{0 \le i \le \ell \atop 0 \le j \le m-1} \left\{ -v_{\infty} (f_{ij})m \alpha + jn \alpha +  i  \beta \right\}, 
\end{equation}
where $f$ has the $w$-expansion
$$f= \sum_{i=0}^\ell \sum_{j=0}^{m-1} f_{i,j} y^jw^i.$$
\end{definition}

Throughout the sequel, we will assume $\alpha$ to be negative so that it is appropriate for use in the definition above.

\begin{lemma}\label{uniqueterm}
Let $\lexp$ be the   map  associated to 
$(m,n, w, \alpha, \beta)$, and let $f,g \in K(x)[y]^*$.
Suppose for some $i_1, i_2, j_1, j_2 \in \Z_{\ge 0}$ such that $0 \le j_1, j_2 \le m-1$,
\begin{eqnarray*}
\lexp(f) & =  & -v_{\infty}(f_{i_1,j_1})m \alpha + {j_1}n \alpha +i_1  \beta;\\
\lexp(g) &  = & -v_{\infty}(g_{i_2,j_2})m \alpha + {j_2} n\alpha + i_2  \beta,
\end{eqnarray*}
where $f_{i_1,j_1}, g_{i_2,j_2}$ are terms of the $w$-expansions of $f, g$, respectively.
If $\lexp(f) = \lexp(g)$, then $i_1 = i_2$, $j_1=j_2$, and $v_\infty(f_{i_1,j_1}) = v_{\infty}(g_{i_2,j_2})$.
\end{lemma}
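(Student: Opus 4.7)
The plan is to equate the two given expressions for $\lexp(f) = \lexp(g)$ and exploit the arithmetic constraints on $(m,n,w,\alpha,\beta)$ imposed by Definition \ref{def:constructval} to force the three equalities one at a time.

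First, I would rearrange the equality
\begin{equation*}
-v_{\infty}(f_{i_1,j_1})m\alpha + j_1 n\alpha + i_1\beta = -v_{\infty}(g_{i_2,j_2})m\alpha + j_2 n\alpha + i_2\beta
\end{equation*}
into the form
\begin{equation*}
\bigl[(v_{\infty}(g_{i_2,j_2})-v_{\infty}(f_{i_1,j_1}))m + (j_1 - j_2)n\bigr]\alpha = (i_2 - i_1)\beta.
\end{equation*}
Since $\alpha$ and $\beta$ are not commensurable by hypothesis, any relation of the form $p\alpha = q\beta$ with $p,q\in\Z$ forces $p = q = 0$. Applying this immediately yields $i_1 = i_2$ and the integer equation
\begin{equation*}
(v_{\infty}(g_{i_2,j_2})-v_{\infty}(f_{i_1,j_1}))m = (j_2 - j_1)n.
\end{equation*}

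Next I would use the coprimality of $m$ and $n$: since $\gcd(m,n)=1$, the displayed relation forces $m \mid (j_2 - j_1)$. Because $0 \le j_1, j_2 \le m-1$, we have $|j_2 - j_1| \le m-1 < m$, so the only multiple of $m$ in this range is $0$; hence $j_1 = j_2$. Substituting back gives $(v_{\infty}(g_{i_2,j_2}) - v_{\infty}(f_{i_1,j_1}))m = 0$, and since $m$ is a positive integer we conclude $v_{\infty}(f_{i_1,j_1}) = v_{\infty}(g_{i_2,j_2})$, completing the proof.

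There is no real obstacle here: the lemma is essentially a uniqueness-of-representation statement for elements of the form $-km\alpha + jn\alpha + i\beta$ with $0\le j\le m-1$, and it follows cleanly from the three structural conditions on the parameters (incommensurability of $\alpha,\beta$; nontriviality of $\alpha$; coprimality of $m,n$ paired with the range bound $j < m$). The only thing to keep in mind when writing is to invoke each hypothesis in the correct order, since dropping any one of the three would cause the argument to fail at the corresponding step.
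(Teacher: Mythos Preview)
Your proposal is correct and follows essentially the same route as the paper: rearrange to a relation $p\alpha = q\beta$, use incommensurability to force $i_1=i_2$ and the vanishing of the $\alpha$-coefficient, then use $\gcd(m,n)=1$ together with $0\le j_1,j_2\le m-1$ to get $j_1=j_2$, and read off the equality of the $v_\infty$-values. The only cosmetic addition in the paper is the remark that $f_{i_1,j_1},g_{i_2,j_2}\neq 0$ (so that their $v_\infty$-values are integers rather than $\infty$), which you are tacitly assuming when you treat the coefficient of $\alpha$ as an integer.
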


\begin{proof}
Suppose $\lexp(f) = \lexp(g)$. Note that $f_{i_1,j_1}, g_{i_2,j_2} \neq 0$; otherwise, $\lexp(f)$ and $\lexp(g)$ could not be written in the form above.
Then $$ (i_1-i_2) \beta = v_{\infty}(f_{i_1,j_1}/g_{i_2,j_2}) m \alpha+  ({j_2} -{j_1})n \alpha.$$
Since $\alpha$ and $\beta$ are not commensurable, $i_1 = i_2$. Moreover, since $m$ and $n$ are relatively prime, it follows that 
 ${j_2}-{j_1}$ is a multiple of $m$.
However, 
  $0 \le j_1, j_2 \le m-1$,
and so $j_1 = j_2$.
 Thus, $v_{\infty} (f_{i_1,j_1}) = v_{\infty}(g_{i_2,j_2})$.
\end{proof}

Setting $f=g$ in the lemma above,
we conclude that there is a unique term in the $w$-expansion
of $f$ that gives rise to its image under $\lexp$.  This leads us to the following definition.

\begin{definition}\label{leadterm}
The term $f_{i,j}y^jw^i$ in the $w$-expansion of $f$
such that $\lexp(f) = \lexp(f_{i,j}y^jw^i)$ is called the {\bf lead term} of $f$ with respect to $(m,n, w, \alpha, \beta)$.  We denote this lead term by $\tau(f)$.
\end{definition}

\begin{lemma}\label{constructvallemma}
The map $\lexp$ 
 associated to 
$(m,n, w, \alpha, \beta)$ has the following properties for all $f,g \in K[x,y]$:
\begin{enumerate}
\item $\lexp(f+g) \ge \min\{\lexp(f), \lexp(g)\}$;
   \item $\lexp(f+g) = \min\{\lexp(f), \lexp(g)\}$ whenever $\lexp(f) \neq \lexp(g)$;
\item If $\lexp(f) = \lexp(g) \neq \infty$, then
$\exists!\lambda\in {K}$ such that  $\lexp(f+\lambda g) > \lexp(f)$.
\end{enumerate}
\end{lemma}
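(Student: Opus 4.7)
The plan is to reduce all three properties to the corresponding properties of the valuation $v_\infty$ applied coefficient-by-coefficient in the $w$-expansion. Since the $w$-expansion is unique (Lemma \ref{decompose}), the coefficient at position $(i,j)$ of $f+g$ is $f_{ij}+g_{ij}$, and scaling $g$ by $\lambda \in K^*$ merely multiplies each coefficient by $\lambda$, leaving all $v_\infty$-values unchanged. A constant bookkeeping point: $m\alpha$ is negative, so multiplying an inequality by $m\alpha$ reverses its direction.

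For (1), at each position $(i,j)$ the strong triangle inequality for $v_\infty$ gives $v_\infty(f_{ij}+g_{ij}) \ge \min\{v_\infty(f_{ij}), v_\infty(g_{ij})\}$; multiplying by $-m\alpha > 0$ and then adding $jn\alpha + i\beta$ preserves this, and taking the min over all positions (treating vanishing coefficients as contributing $\infty$) yields $\lexp(f+g) \ge \min\{\lexp(f), \lexp(g)\}$. For (2), suppose $\lexp(f) < \lexp(g)$ and let $(i_0,j_0)$ be the position of $\tau(f)$. The inequality $\lexp(f) < \lexp(g) \le -v_\infty(g_{i_0,j_0})m\alpha + j_0 n\alpha + i_0\beta$ forces $v_\infty(g_{i_0,j_0}) > v_\infty(f_{i_0,j_0})$, so by the strong triangle inequality for $v_\infty$ the coefficient of $f+g$ at $(i_0,j_0)$ still has $v_\infty$-value $v_\infty(f_{i_0,j_0})$. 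Hence the $(i_0,j_0)$-position of $f+g$ attains exactly $\lexp(f)$, while (1) prevents any other position from going below $\lexp(f)$.

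For (3), assume $\lexp(f) = \lexp(g) \ne \infty$. Applying Lemma \ref{uniqueterm} to the pair $(f,g)$ shows that $\tau(f)$ and $\tau(g)$ occur at a common position $(i_0,j_0)$ and that $v_\infty(f_{i_0,j_0}) = v_\infty(g_{i_0,j_0})$. Since $v_\infty$ is itself a $K$-valuation, axiom (iv) of Definition \ref{def:valuation} produces a unique $\lambda \in K$ with $v_\infty(f_{i_0,j_0}+\lambda g_{i_0,j_0}) > v_\infty(f_{i_0,j_0})$. For this $\lambda$, I will verify that at $(i_0,j_0)$ the contribution of $f+\lambda g$ strictly exceeds $\lexp(f)$, while at any other position $(i,j)$ the contribution is already strictly greater than $\lexp(f)$ because Lemma \ref{uniqueterm} applied to $f$ alone (and to $g$ alone) shows that $(i_0,j_0)$ is the unique position where the min defining $\lexp(f)$ is attained. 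For uniqueness of $\lambda$, if $\lambda' \ne \lambda$ also worked, then $\lexp((\lambda-\lambda')g) \ge \min\{\lexp(f+\lambda g), \lexp(f+\lambda' g)\} > \lexp(f)$, contradicting $\lexp((\lambda-\lambda')g) = \lexp(g) = \lexp(f)$ (using that $\lexp(cg) = \lexp(g)$ for $c \in K^*$).

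The main obstacle I foresee is the sign bookkeeping, since the assumption $\alpha < 0$ forces every monotonicity step to invert; small slips would reverse the inequalities throughout. A secondary subtlety, essential in (3), is that $\lexp(f+\lambda g) > \lexp(f)$ requires checking \emph{every} position, not just $(i_0,j_0)$, and this is precisely where the uniqueness-of-minimum content of Lemma \ref{uniqueterm} is needed.
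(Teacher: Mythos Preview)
Your proposal is correct and follows essentially the same approach as the paper: both reduce the three properties to the corresponding properties of $v_\infty$ applied coefficient-wise in the $w$-expansion, using Lemma~\ref{uniqueterm} for the uniqueness of the minimizing position. The differences are cosmetic---the paper argues (iii) by contradiction (assuming $\lexp(f+\lambda g)=\lexp(f)$ and reapplying Lemma~\ref{uniqueterm}) whereas you verify directly that every position contributes strictly above $\lexp(f)$, and your uniqueness argument via $(\lambda-\lambda')g$ is more explicit than the paper's one-line appeal to the residue field of $v_\infty$---but the underlying mechanism is the same.
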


\begin{proof}
We write the $w$-decompositions of $f$ and $g$ using an index $\ell$ that is large enough to accommodate both expressions:
\begin{eqnarray*}
f&=& \sum_{i=0}^{\ell} \sum_{j=0}^{m-1} f_{i,j} y^jw^i;\\
g&=& \sum_{i=0}^{\ell} \sum_{j=0}^{m-1} g_{i,j} y^jw^i.
\end{eqnarray*}
Consequently, for an appropriate selection of indices, we have the following:
\begin{eqnarray*}
\tau(f) &= &f_{i_1,j_1} y^{j_1} w^{i_1}\\
\tau(g) &= &g_{i_2,j_2} y^{j_2} w^{i_2} \\
\tau(f+g) &= & (f_{i_3,j_3} + g_{i_3,j_3}) y^{j_3} w^{i_3} \\
\end{eqnarray*}
Since $\alpha$ is negative, the inequality $v_{\infty}(f_{i_3,j_3} + g_{i_3,j_3}) \ge \min \{ v_{\infty}(f_{i_3,j_3} ), v_{\infty}(g_{i_3,j_3}) \}$
implies 
\begin{eqnarray*}
\lexp(f+g) 
& = & -v_{\infty}(f_{i_3,j_3} + g_{i_3,j_3})m \alpha + {j_3}n \alpha + i_3  \beta   \\
& \ge &  \min \{ -v_{\infty}(f_{i_3,j_3} )m \alpha + {j_3}n \alpha +i_3  \beta , -v_{\infty}(g_{i_3,j_3})m \alpha + {j_3}n \alpha + i_3  \beta  \}\\
& \ge  & \min \{ \lexp(f), \lexp(g)\},
\end{eqnarray*}
thus justifying property (i).

If $\lexp(f) \neq \lexp(g)$, then without loss of generality we assume that $\lexp(f) <\lexp(g)$. Thus, $\lexp(f) = \lexp(\tau(f))$ is less than the $\lexp$-image of each term appearing in the $w$-expansion of $g$. By definition, $\lexp(\tau(f))$ is less than the $\lexp$-image of any other term appearing in the $w$-expansion of $f$. Given arbitrary terms $T_f, T_g$ in $f, g$, respectively, we can apply property (i) to obtain $\lexp(T_f + T_g) \ge \min \{ \lexp(T_f), \lexp(T_g) \} \ge \lexp (\tau(f))$. Therefore, $\lexp(\tau(f))$ is less than or equal to the $\lexp$-image of any term appearing the in the $w$-expansion of $f+g$. 

If $g$ does not have a term of the form $g_{i_1,j_1}y^{j_1}w^{i_1}$,
 then $\tau(f)=f_{i_1,j_1}y^{j_1}w^{i_1}$ is a term of $f+g$ and so 
 $$\lexp(f+g) =  \lexp(\tau(f)) = \lexp(f)=  \min\{\lexp(f), \lexp(g)\}.$$ 
 This leaves us with the possibility where $g$ does have a term of the form $g_{i_1,j_1}y^{j_1}w^{i_1}$, in which case $(f_{i_1,j_1}+g_{i_1,j_1}) y^{j_1} w^{i_1}$ is a term
 of $f+g$. We previously stated that $\lexp(\tau(f))$ is less than the $\lexp$-image of any other term appearing in the expansion of $g$, and so 
 $\lexp(f_{i_1,j_1}) < \lexp(g_{i_1,j_1})$.
 Thus,
 $\lexp((f_{i_1,j_1}+g_{i_1,j_1}) y^{j_1} w^{i_1})=
 \lexp(f_{i_1,j_1} y^{j_1} w^{i_1})= \lexp(f)$, 
 and so 
 $$\lexp(f+g) = \lexp(f)=  \min\{\lexp(f), \lexp(g)\}.$$
 Thus, we have justified property (ii).

To justify property (iii), we begin by assuming $\lexp(f) = \lexp(g) \neq \infty$. By property (i), we have that $\lexp(f+ \lambda g) \ge \lexp(f)$, and so we only need to show that $\lexp(f+ \lambda g) \neq \lexp(f)$.
By Lemma \ref{uniqueterm},  we have $i_1=i_2$, $j_1=j_2$, and 
$v_{\infty}(f_{i_1,j_1}) = v_{\infty}(g_{i_2,j_2})$. Thus, there exists $\lambda \in K$ such that $v_{\infty}(f_{i_1,j_1} + \lambda g_{i_2,j_2}) > v_{\infty}(f_{i_1,j_1})$.
Write $\tau(f+\lambda g) = (f_{i_5,j_5}  + \lambda g_{i_5,j_5})y^{j_5}w^{i_5}$ for an appropriate choice of indices.
If $\lexp(f + \lambda g) = \lexp(f)$, then by Lemma \ref{uniqueterm}, we have that $i_1 = i_5$ and $j_1 = j_5$ and 
$v_{\infty}(f_{i_5,j_5} + \lambda g_{i_5,j_5}) = v_{\infty}(f_{i_1,j_1})$, in which case
$v_{\infty}(f_{i_1,j_1} + \lambda g_{i_1,j_1}) = v_{\infty}(f_{i_1,j_1})$. Since $i_1=i_2$ and $j_1=j_2$, we have $v_{\infty}(f_{i_1,j_1} + \lambda g_{i_2,j_2}) = v_{\infty}(f_{i_1,j_1})$, a contradiction.

The uniqueness of $\lambda$ follows from the observation that the residue field of $v_{\infty}$ is $K$.
 \end{proof}

The proof of the lemma below follows from the observation that $$\tau(f(x)g(x,y)) = f(x) \tau(g(x,y)) = \tau(f(x)) \tau(g(x,y))$$ when $f(x) \in K(x)$ and $g(x) \in K(x)[y]$.

\begin{lemma}\label{homomorphism:multiplyx}
If $\lexp$ is the map
 associated to 
$(m,n, w, \alpha, \beta)$, then for all
 $f(x) \in K(x), g(x,y) \in K(x)[y]$,
$$\lexp(f(x) g(x,y)) = \lexp(f(x)) + \lexp(g(x,y)).$$
\end{lemma}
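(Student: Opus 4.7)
The plan is to chase the definition through the $w$-expansions, since multiplying a polynomial in $K(x)[y]$ by a scalar from $K(x)$ acts coefficient-wise on the $w$-expansion and hence interacts cleanly with the formula (\ref{prop:lexpconstruction1}).

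First I would write the $w$-expansion $g(x,y) = \sum_{i=0}^{\ell} \sum_{j=0}^{m-1} g_{i,j} y^j w^i$. Multiplying by $f(x) \in K(x)$, the product $f(x) g(x,y) = \sum_{i=0}^{\ell} \sum_{j=0}^{m-1} (f(x) g_{i,j}) y^j w^i$ is itself a valid $w$-expansion, because each coefficient $f(x) g_{i,j}$ still lies in $K(x)$, and the uniqueness of $w$-expansions guaranteed by Lemma \ref{decompose} tells us this is \emph{the} $w$-expansion of $f(x)g(x,y)$.

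Next I would apply formula (\ref{prop:lexpconstruction1}) to both sides. Since $v_\infty$ is a valuation on $K(x)$, we have $v_\infty(f(x) g_{i,j}) = v_\infty(f(x)) + v_\infty(g_{i,j})$, so
\begin{align*}
\lexp(f(x)g(x,y)) &= \min_{i,j} \bigl\{ -v_\infty(f(x) g_{i,j}) m\alpha + jn\alpha + i\beta \bigr\} \\
&= -v_\infty(f(x)) m\alpha + \min_{i,j} \bigl\{ -v_\infty(g_{i,j}) m\alpha + jn\alpha + i\beta \bigr\}.
\end{align*}
The constant term $-v_\infty(f(x)) m\alpha$ pulls out of the minimum because it does not depend on $i$ or $j$, and more importantly because the value group $\Z \oplus \Z$ is totally ordered, so translating by a fixed element preserves which index achieves the minimum. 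The expression $-v_\infty(f(x)) m\alpha$ is precisely $\lexp(f(x))$, viewing $f(x)$ as having the trivial $w$-expansion $f(x) = f(x) \cdot y^0 w^0$, and the remaining minimum is $\lexp(g(x,y))$.

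There is essentially no obstacle here: the argument is a direct unpacking of the definition once one observes that scalar multiplication by an element of $K(x)$ preserves the $w$-expansion structure. The only thing to verify carefully is the claim $\tau(f(x)g(x,y)) = f(x)\tau(g(x,y))$ suggested before the lemma, which amounts to the same observation that the minimum is achieved at the same $(i,j)$ for $g$ and for $f(x)g$.
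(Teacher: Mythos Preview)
Your proof is correct and is essentially a detailed unpacking of the paper's one-line argument: the paper simply observes that $\tau(f(x)g(x,y)) = f(x)\tau(g(x,y)) = \tau(f(x))\tau(g(x,y))$, which is exactly your observation that multiplying by $f(x)$ scales every coefficient of the $w$-expansion and hence shifts every candidate in the minimum by the same amount $-v_\infty(f(x))m\alpha$. You have supplied the computation that the paper leaves implicit.
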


We see in Definition \ref{def:constructval} that the map associated to $(m,n, w, \alpha, \beta)$  satisfies property (i) of Definition \ref{def:valuation}. Moreover, 
we demonstrated in Lemma \ref{constructvallemma} that this map also satisfies properties (iii) and (iv) of Definition \ref{def:valuation}.  If we wish to show that this map also satisfies property (ii) of  
Definition \ref{def:valuation}, then we must require additional conditions. The result below provides a set of conditions that guarantees that the map will be a $K$-valuation.

\begin{prop}\label{homomorphism:powers}
Let $\lexp$ be the map associated  
to $(m,n, w, \alpha, \beta)$ where 
$w = y^m +  \sum_{k=0}^{m-1} w_ky^k$
with $w_k \in K(x)$. 
Suppose  further we have
\begin{enumerate}
\item[(i)] $\beta > mn \alpha$;
\item[(ii)] $\lexp (w_k) > (m-k)n \alpha$ for all $1 \le k \le m-1$;
\item[(iii)] $\lexp (w_0) = mn \alpha$.
\end{enumerate}
Then $\lexp$ is a $K$-valuation on $K(x,y)$.
\end{prop}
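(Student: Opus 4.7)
The plan is to establish the remaining valuation axiom---multiplicativity on $K(x)[y]^*$---and then extend $\lexp$ from $K(x)[y]$ to $K(x,y)$ via $\lexp(f/g) := \lexp(f) - \lexp(g)$. Lemma \ref{constructvallemma} already delivers axioms (i), (iii), and (iv) of Definition \ref{def:valuation}, so only axiom (ii), $\lexp(fg) = \lexp(f) + \lexp(g)$, remains.

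The computational heart of the proof will be the identity $\lexp(y^j w^i) = jn\alpha + i\beta$ for all $i, j \in \Z_{\ge 0}$, which I would establish by strong induction on $j$. For $0 \le j \le m-1$ the claim is immediate from the defining formula, since $y^j w^i$ is already in $w$-expansion form with unit coefficient. For $j \ge m$, the substitution $y^m = w - \sum_{k=0}^{m-1} w_k y^k$ yields
\begin{equation*}
y^j w^i = y^{j-m} w^{i+1} - \sum_{k=0}^{m-1} w_k \, y^{j-m+k} w^i.
\end{equation*}
The inductive hypothesis, together with Lemma \ref{homomorphism:multiplyx} applied to the scalar factors $w_k \in K(x)$, gives the $\lexp$-value of each summand. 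Hypothesis (iii) makes the $k=0$ summand equal to exactly $jn\alpha + i\beta$; hypothesis (i) sends the $y^{j-m}w^{i+1}$ summand strictly above this value; and hypothesis (ii) sends each $k \ge 1$ summand strictly above as well. Since the minimum is uniquely achieved by the $k=0$ term, the strong triangle inequality of Lemma \ref{constructvallemma} yields the claimed identity.

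Granted this identity, multiplicativity on $K(x)[y]^*$ follows from a leading-term argument. For $f, g \in K(x)[y]^*$ with lead terms $\tau(f)$ and $\tau(g)$, the identity and Lemma \ref{homomorphism:multiplyx} give $\lexp(\tau(f)\tau(g)) = \lexp(f) + \lexp(g)$. Writing $fg = \tau(f)\tau(g) + R$, where $R$ collects all other cross products $f_{i,j} g_{a,b}\, y^{j+b} w^{i+a}$, Lemma \ref{uniqueterm} applied separately to $f$ and $g$ forces each such non-leading cross product to have $\lexp$-value strictly larger than $\lexp(f) + \lexp(g)$. Iterating the strong triangle inequality gives $\lexp(R) > \lexp(\tau(f)\tau(g))$, so $\lexp(fg) = \lexp(f) + \lexp(g)$. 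The extension $\lexp(f/g) := \lexp(f) - \lexp(g)$ to $K(x,y)^*$ is then well-defined, and the valuation axioms transfer directly.

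The main obstacle will be the inductive step of the key identity: hypotheses (i), (ii), and (iii) are exactly calibrated so that, in the recursive $w$-expansion of $y^j w^i$, the $k=0$ summand uniquely achieves the minimum $jn\alpha + i\beta$. Once this pivot is secured, the remainder of the argument reduces to routine bookkeeping with the strong triangle inequality and Lemma \ref{uniqueterm}.
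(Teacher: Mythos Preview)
Your proposal is correct and follows essentially the same approach as the paper: prove the key identity $\lexp(y^j w^i) = jn\alpha + i\beta$ by induction on $j$ using the substitution $y^m = w - \sum w_k y^k$ and hypotheses (i)--(iii) to isolate the $k=0$ term as the unique minimum, then deduce multiplicativity via a lead-term argument. The only cosmetic difference is that the paper carries the induction only up to $j \le 2m-1$ (which suffices since products of two $w$-expansion terms have $y$-exponent at most $2m-2$), whereas you prove it for all $j \ge 0$; both arguments are otherwise identical.
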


\begin{proof}
Due to Lemma \ref{constructvallemma}, it only remains to  show that for
all $f, g \in K[x,y]$, we have 
$\lexp(fg) = \lexp(f) +\lexp(g).$ To this end, we first prove by  induction on $j$ that 
\begin{equation}\label{eq:lexpyjwi}
\lexp(y^{j}w^i) = j n \alpha + i \beta
\end{equation} 
for all $i \in \Z_{\ge 0}$ and $0 \le j \le 2m-1$.
Since (\ref{eq:lexpyjwi}) follows from Definition \ref{def:constructval} for $0 \le j \le m-1$,
we need only justify the result when $m \le j \le 2m-1$.

We will assume that (\ref{eq:lexpyjwi}) holds for $0, 1, \dots, j-1$, and prove the result for the index $j$.
Write $j = m+r$ where $0 \le r \le m-1$, in which case
$$
y^j w^i = y^ry^m w^i  =y^r \left(w - \sum_{k=0}^{m-1} w_ky^k\right)  w^i,$$
and so
\begin{equation}\label{eq:ykexpansion}
y^j w^i = y^rw^{i +1}   - \sum_{k=0}^{m-1} w_ky^{k+r} w^i.
\end{equation}
We now show 
$\lexp(y^j w^i) = jn \alpha + i \beta$ by examining each term on the right-hand side of
(\ref{eq:ykexpansion}).

By  Definition \ref{def:constructval} and  assumption (i),
$$\lexp (y^r w^{i+1}) 
= rn \alpha  + (i +1) \beta 
= rn \alpha  + \beta + i \beta 
> (m+r)n \alpha + i \beta ,$$ 
and since $j=m+r$,
\begin{equation}\label{prelim1}
\lexp (y^r w^{i+1}) > jn \alpha + i \beta.
\end{equation}
For any $k< m$, we have $k+r< m+r=j$, and so  the strong induction hypothesis yields
\begin{equation*}\lexp (y^{k+r}  w^i) =  (k+r)n  \alpha + i \beta.
\end{equation*}
By Lemma \ref{homomorphism:multiplyx}, we have 
\begin{equation}\label{eq:ykr}\lexp (w_ky^{k+r}  w^i) = \lexp( w_k )+ (k+r)n  \alpha + i \beta.
\end{equation}
By assumption (ii), $\lexp (w_k) > (m-k)n \alpha$ for all $1 \le k \le m-1$,
and so
\begin{equation}\label{prelim2}
\lexp (w_ky^{k+r}   w^i)> (m-k)n \alpha + (k+r)n  \alpha + i \beta =(m+r)n \alpha + i \beta =  jn \alpha + i \beta
\end{equation}
for all $1 \le k \le m-1$.
Replacing $k$ by $0$ in (\ref{eq:ykr}) and applying assumption (iii), we obtain \begin{equation}\label{prelim3}
\lexp (w_0 y^r  w^i)= mn \alpha + rn \alpha + i \beta= jn \alpha + i \beta.
\end{equation}
Therefore, by (\ref{prelim1}), (\ref{prelim2}), and (\ref{prelim3}), it follows that $w_0 y^rw^i$ is the unique term in the expansion (\ref{eq:ykexpansion}) for which 
$\lexp$ achieves the minimum value $jn \alpha + i \beta$.

Thus, by part (ii) of Lemma \ref{constructvallemma},  
 the identity (\ref{eq:lexpyjwi}) holds 
for all $i \in \Z_{\ge 0}$ and $0 \le j \le 2m-1$.
Therefore, by Lemma \ref{homomorphism:multiplyx}, for all $h \in K(x)$, $i \in \Z_{\ge 0}$ and $0 \le j \le 2m-1$, 
\begin{equation}\label{hyw}
\lexp(hy^{j}w^i) =-  v_{\infty}(h) m \alpha + j n \alpha+ i \beta.
\end{equation}
 
Using this, we can complete our goal of demonstrating that 
$\lexp(fg) = \lexp(f) +\lexp(g)$ for all $f, g \in K[x,y]$.
For any pairs of terms
$f_{i_1,j_1} y^{j_1} w^{i_1}$ and
$g_{i_2,j_2} y^{j_2} w^{i_2}$ appearing in the $w$-expansions  of $f$ and $g$, respectively,
we have by  (\ref{hyw})  that
$$ \lexp(f_{i_1,j_1} y^{j_1} w^{i_1}g_{i_2,j_2} y^{j_2} w^{i_2}) =
 \lexp(f_{i_1,j_1} y^{j_1} w^{i_1}) + \lexp( g_{i_2,j_2} y^{j_2} w^{i_2}  ) .$$
Now, $fg$ is a linear combination of products of terms  coming from the 
 $w$-expansions  of $f$ and $g$, respectively.
 Let $f'$ and $g'$ be arbitrary terms of the $w$-expansions of $f$ and $g$, respectively, at least one of which is distinct from $\tau(f)$ and $\tau(g)$, respectively. Thus,  $\lexp(\tau(f)) \le \lexp(f')$ and $\lexp(\tau(g)) \le \lexp(g')$, with at least one of these inequalities being strict.
It follows that $\lexp(\tau(f)\tau(g)) = \lexp (\tau(f)) + \lexp(\tau(g)) < \lexp(f') + \lexp(g')
= \lexp(f'g')$, and so  of all the products of pairs of terms, $\tau(f)\tau(g)$ has the smallest image under $\lexp$. 
Consequently, 
$$\lexp(fg) = \lexp(\tau(f)\tau(g)) = \lexp(\tau(f)) + \lexp(\tau(g)) = \lexp(f) + \lexp(g).$$
\end{proof}

\section{The Examples}\label{examples}

In \cite{ms1}, an infinite family of valuations on the rational function field $K(x_1, \dots, x_n)$ was constructed such that  $\lexp(K[x_1, \dots, x_n]^*)$ is reversely well ordered.  These valuations are of the form $$\lexp:K(x_1, \dots, x_n) \to \Z^n$$ such that  for any polynomial $f \in K[x,_1, \dots, x_n]$, 
$$\lexp(f) = - \exp (\lm(\varphi(f))),$$
where $\exp$ represents the standard exponent vector of a monomial, $\lm(h)$ represents the leading monomial of the polynomial $h$ with respect to a fixed monomial order, and $\varphi$ is a $K$-algebra automorphism of the polynomial ring $K[x_1, \dots, x_n]$.  We say that such valuations {\bf come from a monomial order in suitable variables}. The following proposition from \cite{ms1} 
classifies all valuations on $K(x_1, \dots, x_n)$ that come from a monomial order in suitable variables.
\begin{prop}\label{suitvars}
If   $\lexp: K(x_1, \dots, x_n) \to \Z^n \cup \{\infty\}$ is a $K$-valuation, then $\lexp$ comes from a monomial order in suitable variables precisely when  $\lexp(K[x_1, \dots, x_n]^*)$ is isomorphic to the monoid $(\Z_{\le 0})^n$.

\end{prop}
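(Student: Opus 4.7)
The plan is to prove both directions of the equivalence: the forward direction is a direct computation, while the backward direction requires constructing the automorphism and the monomial order. For the forward direction, suppose $\lexp(f) = -\exp(\lm(\varphi(f)))$ for some $K$-algebra automorphism $\varphi$ of $K[x_1,\ldots,x_n]$ and some monomial order. Setting $g_i = \varphi^{-1}(x_i)$, we have $\lexp(g_i) = -e_i$, and these freely generate $\lexp(K[x_1,\ldots,x_n]^*) = -\N^n$ as a monoid, yielding the isomorphism with $(\Z_{\le 0})^n$.

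For the backward direction, choose $g_1, \ldots, g_n \in K[x_1,\ldots,x_n]^*$ whose values correspond, under the given monoid isomorphism, to $-e_1, \ldots, -e_n$ in $(\Z_{\le 0})^n$. Since $\lexp(K(x_1,\ldots,x_n)^*) = \Z^n$ and every rational function is a quotient of polynomials, the values $\lexp(g_1), \ldots, \lexp(g_n)$ form a $\Z$-basis of $\Z^n$; in particular the $g_i$ are algebraically independent over $K$. The key step is to show $K[g_1, \ldots, g_n] = K[x_1,\ldots,x_n]$. Given $f \in K[x_1,\ldots,x_n]^*$, there is a unique $a \in \N^n$ with $\lexp(f) = \sum_i a_i \lexp(g_i) = \lexp(g^a)$ (writing $g^a := g_1^{a_1}\cdots g_n^{a_n}$), so axiom (iv) of Definition \ref{def:valuation} yields a unique $\lambda \in K^*$ with $\lexp(f - \lambda g^a) > \lexp(f)$. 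Iterating produces a strictly increasing chain of $\lexp$-values in $\lexp(K[x_1,\ldots,x_n]^*)$, which must terminate because the hypothesis implies this set is reversely well-ordered; this expresses $f$ as a finite $K$-linear combination of monomials in the $g_i$. Hence $\psi \colon K[x_1,\ldots,x_n] \to K[x_1,\ldots,x_n]$ defined by $\psi(x_i) = g_i$ is a $K$-algebra automorphism.

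Set $\varphi = \psi^{-1}$ and define $\prec$ on $\N^n$ by $a \prec b$ if and only if $\lexp(g^a) > \lexp(g^b)$. This relation is compatible with addition, is a well-order by reverse well-ordering of $\lexp(K[x_1,\ldots,x_n]^*)$, and satisfies $0 \preceq a$ by its nonpositivity, so it is a monomial order. For $f \in K[x_1,\ldots,x_n]^*$, writing $\varphi(f) = \sum c_a x^a$ yields $f = \sum c_a g^a$; the values $\lexp(g^a)$ are pairwise distinct by the $\Z$-basis property, so the strong triangle inequality gives $\lexp(f) = \lexp(g^{a^*})$ where $a^* = \exp(\lm(\varphi(f)))$ is the $\prec$-largest exponent appearing in $\varphi(f)$. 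Identifying $\lexp(g_i)$ with $-e_i$ under the monoid isomorphism gives $\lexp(f) = -a^* = -\exp(\lm(\varphi(f)))$, as desired.

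The main obstacle is translating the abstract monoid isomorphism $\lexp(K[x_1,\ldots,x_n]^*) \cong (\Z_{\le 0})^n$ into the order-theoretic properties of $\lexp(K[x_1,\ldots,x_n]^*)$ as a submonoid of $\Z^n$ — namely nonpositivity (each $\lexp(g_i) < 0$) and reverse well-ordering. Both are essential: nonpositivity secures the axiom $0 \preceq a$ for the monomial order, while reverse well-ordering drives both the termination of the elimination procedure and the well-ordering condition on $\prec$. This suggests the intended reading of the hypothesis is an ordered-monoid isomorphism, equivalently that $\lexp(K[x_1,\ldots,x_n]^*)$ coincides with $(\Z_{\le 0})^n$ after a suitable $\Z$-linear change of basis of $\Z^n$.
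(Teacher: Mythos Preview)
The paper does not prove this proposition; it is quoted from \cite{ms1} and stated without proof. So there is no in-paper argument to compare against, and what follows is an assessment of your reconstruction on its own merits.

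Your argument is sound and is essentially the natural proof, but you have correctly put your finger on a genuine ambiguity in the statement. As written, ``isomorphic to the monoid $(\Z_{\le 0})^n$'' could mean an abstract monoid isomorphism, and under that reading the backward direction is actually false already for $n=1$: the $p$-adic valuation $v_p$ on $K(x)$ (for $p \in K[x]$ irreducible) has $v_p(K[x]^*) = \Z_{\ge 0}$, which is abstractly isomorphic to $\Z_{\le 0}$, yet $v_p$ does not come from a monomial order in suitable variables. So your final paragraph is not a minor caveat but identifies the only reading under which the proposition is true: the isomorphism must respect the ambient order on $\Z^n$, equivalently $v(K[x_1,\dots,x_n]^*)$ must equal $(\Z_{\le 0})^n$ after a $\Z$-linear change of coordinates. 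Under that reading your termination and nonpositivity arguments go through.

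One small simplification: you do not need reverse well-ordering to verify that $\prec$ is a well-order on $\N^n$. Once you know $\prec$ is a total order on $\N^n$ compatible with addition and with $0$ as least element, Dickson's lemma forces it to be a well-order. Reverse well-ordering is still essential, as you note, for the termination of the elimination that proves $K[g_1,\dots,g_n] = K[x_1,\dots,x_n]$.
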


With the aid of Proposition \ref{homomorphism:powers}, in Example  \ref{ex:suitablevals} we construct  valuations of the form $\lexp: K(x,y) \to \Z \oplus \Z$ such that $\lexp$ does not come from a monomial order in suitable variables, and yet $\lexp(K[x,y]^*)$ is still reversely well ordered.  
In the example below, we endow the value group  $\Z \oplus \Z$
with the lexicographic order, where the positive elements are ordered pairs $(a,b)$ such that either (i) $a>0$ or (ii) $a=0$ and $b > 0$.

\begin{example}\label{ex:suitablevals}
Let $\lexp$ be the map associated  
to $(m,n, w, \alpha, \beta)$ where 
$w = y^m +  x^n$, $\alpha = (-1,-1)$, $\beta = (0,-1)$, and $\Z \oplus \Z$ is endowed with the lexicographic order.
According to Proposition \ref{homomorphism:powers}, 
 $\lexp$ is a $K$-valuation on $K(x,y)$ because of the following observations:
 \begin{enumerate}
\item[(i)] $\beta$ has been selected so that $\beta > mn \alpha$;
\item[(ii)] $w_k=0$ whenever $1 \le k \le m-1$, in which case $\lexp (w_k)  = \infty$;
\item[(iii)] $\lexp (w_0) = \lexp(x^n) = mn \alpha$.
\end{enumerate}
Since $w \in K[x,y]$ is a monic polynomial in the variable $y$, the  $w$-expansion of each $f \in K[x,y]$ is of the form
$$f= \sum_{i=0}^\ell \sum_{j=0}^{m-1} f_{i,j} y^jw^i,$$
where each coefficient $f_{ij}$ is an element of $K[x]$. Thus, $v_\infty(f_{ij}) \le 0$
for each nonzero coefficient $f_{ij}$.
We have by (\ref{prop:lexpconstruction1}) that
 \begin{equation*}
 \lexp(f) =  \min_{0 \le i \le \ell \atop 0 \le j \le m-1} \left\{ -v_{\infty} (f_{ij})m \alpha + jn \alpha +  i  \beta \right\},
\end{equation*}
and so $$\lexp(f) \in \Z_{\ge 0} \alpha + \Z_{\ge 0} \beta \subset \Z_{\le 0} \oplus \Z_{\le 0},$$
from which it follows that $\lexp(K[x,y]^*)$ is reversely well ordered. 
However, since $(-1,0) \not \in \Z_{\ge 0} \alpha + \Z_{\ge 0} \beta$, it follows that 
$\lexp(K[x,y]^*) \neq \Z_{\le 0} \oplus \Z_{\le 0}$, and so by Proposition \ref{suitvars},
we see that $\lexp$ does not come from a monomial order in suitable variables.
\end{example}

In contrast to Example \ref{ex:suitablevals},  we construct a $K$-valuation $\lexp$ on $K(x,y)$ in  Example \ref{ex:not principal} such that $\lexp(K[x,y]^*)$ is nonpositive but not reversely well ordered. We will first do this by demonstrating that for this example, $\lexp(K[x,y]^*) \subset (\Z_{< 0} \oplus \Z) \cup \{(0,0)\} \subset \Z \oplus \Z,$ where $\Z \oplus \Z$ is endowed with the  lexicographic order.
Specifically, we identify a sequence $\{ f_i \}_{i=0}^\infty \subset K[x,y]$ such that $\deg_y f_i = 2(i+1)$ and $\lexp(f_i) = (-1,i-1)$.

When examining (\ref{prop:lexpconstruction1}),
we see that $\lexp(x^i) = i m\alpha$ for $i \in \Z_{\ge 0}$. However, a much greater challenge is computing the 
$\lexp$-image of a generic power of $y$.
To this end, we must first consider the $w$-expansion of $y^e$ where $e \in \Z_{>0}$. Note that we can write $e= qm+r$ with $q, r \in \Z_{\ge 0}$ such that $0 \le r \le m-1$.
We adopt the following notation for the $w$-expansion of $y^e$ promised by Lemma \ref{decompose}:
\begin{equation}\label{expandypower}
y^e= y^{qm+r} = \sum_{i=0}^{q} \sum_{j=0}^{m-1} y_{im+j}^{(e)} y^j w^i,
\end{equation}
where  $y_{im+j}^{(e)} \in K(x)$ depends on $e,i,$ and $j$.
Since $w$ is monic in the variable $y$, we have
\begin{equation}\label{yt}
y_t^{(t)} = 1.
\end{equation}
Moreover, in  (\ref{expandypower}), we have $\deg_y y^jw^i = mi+j$, and so  when
$im+j > e$, it follows that $y_{im+j}^{(e)} = 0$.
More generally,  for  $s,t \in \Z_{\ge 0}$ such that $s<t$,  we define 
\begin{equation}\label{yts}
y_t^{(s)} = 0.
\end{equation}

\begin{example}
Define $m=2$ and $w= y^2+y/x+x^3$.
The $w$-expansion of $y^4$ is
$$y^4 = (-x + x^6) + ( x^{-3} - 2 x^2)y + ((x^{-2} - 2 x^3) + 2x^{-1}y)w+w^2,$$
and so
$$y_{0}^{(4)} = -x + x^6,
y_{1}^{(4)} = x^{-3} - 2 x^2,
y_{2}^{(4)} = x^{-2} - 2 x^3,
y_{3}^{(4)} = 2x^{-1},
y_{4}^{(4)} = 1.
$$
\end{example}

In the proposition below, we demonstrate the existence of an infinite sequence of polynomials in $K[x,y]$ of  increasing $y$-degree, all of whose $\lexp$-images are bounded below by a constant determined by the parameters $(m, n, w, \alpha, \beta)$. Using linear combinations of the polynomials guaranteed by the proposition below will allow us to construct such a sequence of polynomials whose $\lexp$-images increase without bound.

\begin{prop}
\label{choosevalue}
Let $\lexp$ be the map associated  
to $(m,n, w, \alpha, \beta)$ where 
$w = y^m +  \sum_{k=0}^{m-1} w_ky^k$
with $w_k \in K(x)$. 
Suppose  further we have
\begin{enumerate}
\item[(i)] $\beta > 0$;
\item[(ii)] $\lexp(w_k) > (m-k)n \alpha$ for all $1 \le k \le m-1$;
\item[(iii)] $\lexp(w_0) = mn \alpha$.
\end{enumerate}
For all $d\in \Z_{\ge 0}$, there exists $f \in K[x,y]^*$ that is monic in $y$ such that $\deg_y f = dm$ and
$$\lexp(f) \ge  (mn -m-n) \alpha  .$$
\end{prop}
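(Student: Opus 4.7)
The plan is to prove the proposition by induction on $d$. For the base case $d=0$, take $f=1\in K[x,y]^*$: it is monic in $y$ of degree $0$, and $\lexp(1)=0\ge (mn-m-n)\alpha$ under the natural non-degeneracy assumption that $m,n\ge 2$, which makes $mn-m-n\ge 0$ and hence (since $\alpha<0$) makes $(mn-m-n)\alpha\le 0$.

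For the inductive step I would build $f_{d+1}$ from $f_d$ by first forming a natural candidate of the correct shape inside $K[x,y]$ and then correcting it. A convenient candidate is $g:=\tilde w\cdot f_d$, where $\tilde w:=y^m+p_0$ and $p_0\in K[x]$ is the unique polynomial with $v_\infty(p_0-w_0)>0$ supplied by Corollary~\ref{polyrat}; then $\tilde w\in K[x,y]$ is monic in $y$ of degree $m$, so $g\in K[x,y]$ is monic in $y$ of degree $(d+1)m$. Writing $\tilde w = w-\sum_{k\ge 1}w_k y^k+(p_0-w_0)$ and invoking Lemma~\ref{constructvallemma}, the $w$-expansion of $g$ typically contains terms whose $\lexp$-value falls below $(mn-m-n)\alpha$.

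The corrective step is to subtract from $g$ a polynomial $h\in K[x,y]$ with $\deg_y h<(d+1)m$ whose $w$-expansion cancels the offending low-$\lexp$ terms. The existence of $h$ rests on a Frobenius-type argument: since $\gcd(m,n)=1$, the numerical semigroup $\Z_{\ge 0}m+\Z_{\ge 0}n$ contains every integer $\ge (m-1)(n-1)=mn-m-n+1$, and this combinatorial fact lets the $v_\infty$-part of each bad coefficient be matched by a $K[x]$-multiple of some $w^iy^j$ (or, recursively, by a lower-degree polynomial supplied by the induction hypothesis), keeping the correction inside $K[x,y]$ rather than merely $K(x)[y]$.

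The main obstacle is orchestrating these cancellations so that (i) they are all performable simultaneously inside $K[x,y]$, (ii) they preserve monicity of the running polynomial (forcing $\deg_y h<(d+1)m$), and (iii) canceling one bad term does not introduce another with $\lexp<(mn-m-n)\alpha$. I would handle this by iterating cancellation on the bad positions $(i,j)$ of the $w$-expansion in a well-founded order, exploiting that $(mn-m-n)\alpha$ is precisely the Frobenius threshold for representability in the semigroup generated by $m\alpha$ and $n\alpha$: once every remaining low-$\lexp$ contribution admits a representation via polynomial rather than merely rational coefficients, the process terminates and produces $f_{d+1}$ with $\lexp(f_{d+1})\ge (mn-m-n)\alpha$.
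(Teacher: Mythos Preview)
Your proposal has a genuine gap in the inductive step. You correctly identify that multiplying $f_d$ by a polynomial approximation $\tilde w$ of $w$ will introduce bad terms, but you never actually construct the correcting polynomial $h$; you only list the obstacles and assert that a ``Frobenius-type argument'' handles them. The Frobenius number $mn-m-n$ does appear in the target bound, but not for the reason you suggest: it does not govern which bad terms admit \emph{polynomial} (rather than rational) corrections. The real difficulty is that your inductive hypothesis $\lexp(f_d)\ge(mn-m-n)\alpha$ gives no information about the individual $w$-expansion coefficients of $f_d$. After multiplying by $\tilde w$ and re-expanding (which forces repeated substitutions $y^m=w-\sum_k w_ky^k$ with $w_k\in K(x)$), you have no control over the $v_\infty$-values of the new coefficients and no mechanism guaranteeing that your iterated cancellation terminates or stays inside $K[x,y]$. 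The sketch in your last paragraph is a restatement of the problem, not a solution to it.

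The paper's proof avoids induction on $d$ altogether and is much more direct. It writes $f=\sum_{t=0}^{dm}c_ty^t$ and chooses the coefficients $c_t\in K[x]$ recursively from the top down (starting with $c_{dm}=1$), using Corollary~\ref{polyrat} at each step to pick $c_t$ as the unique polynomial with $v_\infty\bigl(c_t+\sum_{s>t}c_s\,y_t^{(s)}\bigr)>0$, where $y_t^{(s)}$ is the coefficient of $y^jw^i$ (with $t=im+j$) in the $w$-expansion of $y^s$. A short computation then shows that the $w$-expansion $f=\sum a_{im+j}y^jw^i$ has $v_\infty(a_t)\ge1$ for every $t<dm$ and $a_{dm}=1$. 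From this, $\lexp(a_{im+j}y^jw^i)\ge -m\alpha+jn\alpha+i\beta\ge -m\alpha+(m-1)n\alpha=(mn-m-n)\alpha$, using only $\beta>0$, $\alpha<0$, and $0\le j\le m-1$. The number $mn-m-n$ falls out of this last arithmetic, not from any representability argument in the numerical semigroup.
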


\begin{proof}
We adopt the notation (\ref{expandypower}) for the $w$-expansion of $y^{qm+r}$
where $q, r \in \Z_{\ge 0}$ with  $0 \le r \le m-1$.

We first define a finite sequence $c_0, \dots, c_{dm}$ of elements of
$K[x]$ by first defining $c_{dm}=1$ and recursively working down to 
 $c_0$.
 Specifically, given $c_{dm},  \dots, c_{t+1}$,
define $c_t$ to be the unique element of $K[x]$ promised by Lemma
\ref{polyrat}
such that
\begin{equation}\label{deltac}
 v_{\infty}\left( c_{t} +
\sum_{s={t+1}}^{dm} c_sy_{t}^{(s)}\right) > 0.
\end{equation}
If we define $f\in K[x,y]$ by $$f = \sum_{t=0}^{dm} c_t y^t,$$
we can write \begin{equation}
f = \sum_{q=0}^{d} \sum_{r=0}^{m-1} c_{qm+r} y^{qm+r},
\end{equation}
where 
\begin{equation}\label{sdm}
c_s = 0 \mbox{ for } s > dm.
\end{equation}
Using (\ref{expandypower}), we express this as
\begin{eqnarray}
f & = &   \sum_{q=0}^{d}  \sum_{r=0}^{m-1}   \sum_{i=0}^{q}  \sum_{j=0}^{m-1} c_{qm+r} y_{im+j}^{(qm+r)} y^j w^i  \\
& = &  \sum_{i=0}^{d}  \sum_{j=0}^{m-1}   \sum_{q=i}^{d}   \sum_{r=0}^{m-1} c_{qm+r} y_{im+j}^{(qm+r)} y^j w^i.
\end{eqnarray}
Thus,
\begin{equation}\label{fexpand}
f=  \sum_{i=0}^{d}  \sum_{j=0}^{m-1}  a_{im+j} y^j  w^i 
\end{equation}
where $$ a_{im+j} =  \sum_{q=i}^{d}    \sum_{r=0}^{m-1} c_{qm+r} y_{im+j}^{(qm+r)} .$$
Note that when $i \le q \le d$ and $0 \le r \le m-1$, we have 
$im \le qm+r \le dm+m-1$. Thus.
 $$ a_{im+j} = \sum_{s=im}^{dm+m-1} c_{s} y_{im+j}^{(s)} .$$
From (\ref{yts}), we have that $y_{im+j}^{(s)} =0$ whenever $s<im+j$, and so this can be rewritten as
 $$ a_{im+j} = \sum_{s=im+j}^{dm+m-1} c_{s} y_{im+j}^{(s)} .$$
Since $im+j$ represents any nonnegative integer, we can replace $im+j$ by the variable $t$ to obtain
 $$ a_{t} = \sum_{s=t}^{dm+m-1} c_{s} y_{t}^{(s)} .$$
We know that  $y_{t}^{(t)} = 1$ by (\ref{yt}), and so
$$ a_{t} = c_{t} +
\sum_{s={t+1}}^{dm+m-1} c_sy_{t}^{(s)}.$$
Moreover, 
by (\ref{sdm}) we can rewrite this as
\begin{equation}\label{dim+j}
 a_{t} = c_{t} +
\sum_{s={t+1}}^{dm} c_sy_{t}^{(s)}.
\end{equation}
Thus, by (\ref{deltac}),  for  $t< dm$,  we have $v_{\infty}(a_t) \ge 1$, and 
so by Definition \ref{def:constructval}, since $\alpha$ is negative, we have 
\begin{equation}\label{at}
\lexp(a_t)  = - v_{\infty}(a_t) m \alpha \ge - m \alpha.
\end{equation}
Now we consider the images, under $\lexp$, of the terms in the  expression (\ref{fexpand}) so that we can then use the  triangle inequality to put a bound on  the image of the entire sum. Note that 
  $a_{im+j} = 0$ for $im+j > dm$,
and so we only need to consider when $im+j \le dm$.  
   For the term corresponding to $i=d$ and $j=0$ in expression  (\ref{fexpand}), we have $a_{dm}=1$, and so 
\begin{equation}\label{imagepiece1}
   \lexp(a_{dm} y^0w^d) = d \beta > 0.
   \end{equation}
For $0 \le im+j < dm$,  
we have by (\ref{at}) that
$$\lexp(a_{im+j} y^j  w^i) \ge -m \alpha+jn\alpha + i \beta.$$ Since  $\beta >0$,
$$\lexp(a_{im+j} y^j  w^i) 
\ge -m \alpha + j n \alpha,$$
and since $0 \le j \le m-1$,
\begin{equation}\label{imagepiece2}
\lexp(a_{im+j} y^j  w^i)  \ge (mn-m-n)\alpha
\end{equation}
whenever $0 \le im+j < dm$.
Using (\ref{imagepiece1}) and (\ref{imagepiece2}), we can apply the triangle inequality to (\ref{fexpand}) to conclude
$$\lexp(f)  \ge (mn-m-n)\alpha.$$
\end{proof}

In order to use this proposition to recursively construct a sequence of polynomials whose $\lexp$-images are not bounded above, we first prove a lemma that will assist us with this process.

\begin{lemma}\label{immsucclemma}
Suppose $\lexp$ is a $K$-valuation  on $L$. Suppose further that $f_0, \dots, f_d \in L$ such that    $\lexp(f_{i+1})$ is the immediate successor of $\lexp(f_{i})$ in the value group whenever $0 \le i \le d-1$.
Then given $f \in L$ such that $\lexp(f) \ge \lexp(f_0)$, there exist $\lambda_0, \dots \lambda_d\in K$ such that
$$\lexp\left( f + \sum_{i=0}^d \lambda_i f_i \right) > \lexp(f_d).$$
\end{lemma}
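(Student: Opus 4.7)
The plan is to prove the lemma by constructing the coefficients $\lambda_0, \ldots, \lambda_d$ one at a time through an iterative procedure, where at each stage we have an element whose valuation has been raised to at least $\lexp(f_i)$.

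More precisely, I would define a finite sequence $g_0, g_1, \ldots, g_{d+1}$ in $L$ together with $\lambda_0, \ldots, \lambda_d \in K$ by setting $g_0 = f$ and, at each step $i$ with $0 \le i \le d$, choosing $\lambda_i$ so that $g_{i+1} := g_i + \lambda_i f_i$ satisfies $\lexp(g_{i+1}) \ge \lexp(f_{i+1})$ (with the convention that ``$\lexp(f_{d+1})$'' in the final step is interpreted as any value strictly greater than $\lexp(f_d)$). The choice splits into two cases: if $\lexp(g_i) > \lexp(f_i)$, then by the immediate-successor hypothesis there is nothing in the value group strictly between $\lexp(f_i)$ and $\lexp(f_{i+1})$, so $\lexp(g_i) \ge \lexp(f_{i+1})$, and we may take $\lambda_i = 0$. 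If instead $\lexp(g_i) = \lexp(f_i)$, then axiom (iv) of Definition~\ref{def:valuation}, applied to $g_i$ and $f_i$, produces a (unique) $\mu \in K$ with $\lexp(g_i + \mu f_i) > \lexp(g_i) = \lexp(f_i)$; setting $\lambda_i = \mu$ gives $\lexp(g_{i+1}) > \lexp(f_i)$, which again forces $\lexp(g_{i+1}) \ge \lexp(f_{i+1})$ by the immediate-successor property.

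Once this iteration is carried out, telescoping gives
\begin{equation*}
g_{d+1} = f + \sum_{i=0}^{d} \lambda_i f_i,
\end{equation*}
and the final step (where the required inequality is just $\lexp(g_{d+1}) > \lexp(f_d)$) yields the conclusion. The induction is kept honest by verifying at the outset that $\lexp(g_0) = \lexp(f) \ge \lexp(f_0)$, which is the hypothesis of the lemma.

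I do not expect any genuine obstacle here: the argument is essentially just iterated application of axiom (iv). The only place that requires a bit of care is the case split at each step, where one must use the ``immediate successor'' hypothesis to rule out the value group having intermediate elements between $\lexp(f_i)$ and $\lexp(f_{i+1})$; without that hypothesis, raising $\lexp(g_i)$ strictly above $\lexp(f_i)$ would not guarantee that $\lexp(g_i) \ge \lexp(f_{i+1})$, and the induction would break.
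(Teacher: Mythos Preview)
Your proposal is correct and follows essentially the same approach as the paper: both arguments iterate axiom~(iv) of Definition~\ref{def:valuation}, using the immediate-successor hypothesis at each step to guarantee the valuation rises to at least the next $\lexp(f_i)$. The only cosmetic difference is that you index the iteration by $i=0,\dots,d$ and set $\lambda_i=0$ when no adjustment is needed, whereas the paper indexes by the number of nontrivial steps taken and identifies at each stage which $f_{j_i}$ matches the current valuation.
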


\begin{proof}
Consider $f \in L$ such that $\lexp(f) \ge \lexp(f_0)$. 
If $\lexp(f) > \lexp(f_d)$, then the proof follows by setting $\lambda_0 = \cdots = \lambda_d = 0$. Otherwise, $\lexp(f) \in \{ \lexp(f_0), \cdots, \lexp(f_d) \}$, and
we proceed by  recursively constructing $h_0, \dots, h_\ell \in L$  (for some index $\ell$) such that whenever $0 \le i \le \ell-1$,
\begin{enumerate}
\item[(i)] $\lexp(h_i) \in \{ \lexp(f_0), \dots, \lexp(f_d) \}$, 
\item[(ii)] $\lexp(h_{i+1}) > \lexp(h_{i})$.
\end{enumerate}
Setting $h_0 = f$, condition (i) immediately follows.
Given $h_0, \dots, h_i$ where $0 \le i \le \ell-1$, we  construct $h_{i+1}$ as follows.
Since
$\lexp(h_{i}) = \lexp(f_{j_i})$ for some $j_i$ where
$0 \le j_i \le d$,  there exists
$\lambda_{j_i} \in K$ such that
$\lexp(h_{i} + \lambda_{j_i} f_{j_i}) > \lexp (h_{i})$, and we define $h_{i+1} = h_{i} + \lambda_{j_i} f_{j_i}$, in which case $\lexp(h_{i+1}) > \lexp(h_i)$.

Since $\lexp(f_0), \dots, \lexp(f_d)$ is a sequence of immediate successors, there must exist an index $\ell$ where $\lexp(h_{\ell}) > \lexp(f_d)$.
The conclusion follows from the observation that
$h_\ell = f + \sum_{i=0}^{\ell-1} \lambda_{j_i} f_{j_i}.$
\end{proof}

\begin{example}\label{ex:not principal}
There exists a  unique $K$-valuation  $\lexp : K(x,y) \to (\Z \oplus \Z, \text{lex})$ where $\lexp(x) = (-2,-2)$,
$\lexp(y) = (-3,-3)$, and
$\lexp(y^2+y/x + x^3) =(0,-1)$
such that $\lexp(K[x,y]^*)$ is nonpositive, yet not reversely well ordered. Moreover,
\begin{equation}\label{valmonoid}
\lexp(K[x,y]^*) = \{ (0,0)\} \cup  \left( \Z_{> 0} (-1,-1) + \Z_{\ge 0}(0,1) \right).
\end{equation}
\end{example}

\begin{proof}
Define $m=2, n=3, \alpha = (-1,-1), \beta = (0,1)$,
and  $$w= y^2+y/x+x^3.$$
By Proposition \ref{homomorphism:powers},  the map associated to $(m,n, w, \alpha, \beta)$,  is a $K$-valuation on $K(x,y)$. 
By Definition \ref{def:constructval}, for any $f \in K[x,y]^*$,
 \begin{equation}\label{lexpf}
 \lexp(f) =  \min_{0 \le i \le \ell \atop 0 \le j \le m-1} \left\{- v_{\infty} (f_{ij})m (-1,-1) + jn (-1,-1) + i  (0,1) \right\} \in \Z (-1,-1) + \Z_{\ge 0}(0,1),
\end{equation}
where $f$ has $w$-expansion $\sum_{i=0}^\ell \sum_{j=0}^{m-1} f_{i,j} y^jw^i.$
Moreover, 
$\lexp(x) = (-2,-2)$, $\lexp(y) = (-3,-3)$, and 
\begin{equation}\label{y2x3}
\lexp(y^2+x^3) = 
\lexp(w - y/x) = \min\{(0,1),(-1,-1) \} =
(-1,-1).
\end{equation}

First, we show that the valuation is nonpositive on $K[x,y]^*$. To do so, 
we not only  consider the lexicographic order on the value group $\Z \oplus \Z$, but we also endow the polynomial ring $K[x,y]^*$ with a different order, namely, the lexicographical order with $y>x^i$ for all $i \in \Z_{\ge 0}$.  
 This is a total order on the set of nonzero monomials given by the rule $x^{a_1}y^{b_1} > x^{a_2}y^{b_2}$ whenever 
either (i) $b_1-b_2$ is positive or (ii) $b_1=b_2$ and $a_1-a_2$ is positive.
Given a polynomial $f$, the leading term $\text{lt}(f)$ is defined as the term whose underlying monomial is maximal among those appearing in $f$.

Given a nonconstant $f \in K[x,y]^*$, we  consider four cases:
\begin{enumerate}
\item[(1)]  $\lt (f) = \lambda y^{2 \ell}$ with $\lambda \in K^*$,  $\ell > 0$;
\item[(2)] 
 $\lt (f) = \lambda x^k y^{2\ell}$ with $\lambda \in K^*$,  $k, \ell >0$;
\item[(3)]  $\lt (f) = \lambda x^k y^{2 \ell+1}$ with $\lambda \in K^*$, $k, \ell \ge 0$;
\item[(4)]  $\lt (f) = \lambda x^k$ with $\lambda \in K^*$,  $k  > 0$.
\end{enumerate}
Before considering these cases, we first observe that 
\begin{equation}\label{eq:w-ell}
w^ \ell  =  (y^2 + (y/x) + x^3)^\ell  =  y^{2\ell} + \ell y^{2 \ell -1} (1/x) +   g_1,
\end{equation}
where $g_1 \in K(x)[y]$ with $\deg_y (g_1) \le 2 \ell -2$.

\medskip

\noindent
{\bf Case 1:} \ 
Suppose $\lt (f) = \lambda y^{2 \ell}$ with $\lambda \in K^*$,  $\ell > 0$.
Since $\deg_y(w) = 2$, the
 $w$-expansion of $f$ is of the form $$f= \lambda w^\ell +  \sum_{i=0}^{\ell-1}   (  f_{i,0} + f_{i,1}y ) w^i,$$
and so by (\ref{eq:w-ell}),
\begin{eqnarray*}
f  &= &  \lambda y^{2\ell} + \lambda \ell y^{2 \ell -1} (1/x) +   \lambda g_1  +   \sum_{i=0}^{\ell-1}   (  f_{i,0} + f_{i,1}y ) w^i \\
 &= &  \lambda y^{2\ell} +  \left( (\lambda\ell /x)  + f_{\ell-1,1} \right) y^{2\ell-1} + 
g_2,
 \end{eqnarray*}
where $g_2 \in K(x)[y]$ with $\deg_y (g_2) \le 2 \ell -2$. 
Defining 
\begin{equation}\label{g3}
g_3 = (\lambda\ell/x)  + f_{\ell-1,1},
\end{equation}
we see that $g_3 \in K[x]$ since it
is the coefficient of $y^{2\ell-1}$ appearing in the polynomial $f \in K[x,y]$.

Now, $\lexp(-\lambda \ell/x) = (2,2)$ and $\lexp(g_3) = \deg_x(g_3) \cdot (-2,-2)$, and so $\lexp(-\lambda\ell/x) \neq \lexp(g_3)$. Therefore, using (\ref{g3}) we see
 $$\lexp(f_{\ell-1,1}) = \lexp(-\lambda\ell/x + g_3)
= \min\{ \lexp(-\lambda\ell/x) , \lexp(g_3)
\} = 
 \deg_x(g_3) \cdot (-2,-2),$$
and so $$\lexp(  f_{\ell-1,1}y ) = \lexp(f_{\ell-1,1}) + \lexp(y) 
= \deg_x(g_3) \cdot (-2,-2) + (-3,-3).$$
Since 
$f_{\ell-1,0} \in K(x)$, we have
$\lexp(f_{\ell-1,0}) \in 2 (\Z \oplus \Z)$, from which it follows that
$$\lexp(f_{\ell-1,0}) \neq \lexp(f_{\ell-1,1}y),$$
and so 
$$\lexp(f_{\ell-1,0} + f_{\ell-1,1}y) = \min\{\lexp(f_{\ell-1,0}), \lexp(f_{\ell-1,1}y)
\}
\le  \lexp(f_{\ell-1,1}y) = \deg_x(g_3) \cdot (-2,-2) + (-3,-3).$$
Thus,
\begin{eqnarray*}
\lexp((f_{\ell-1,0} + f_{\ell-1,1}y)w^{\ell-1})  & \le & \deg_x(g_3) \cdot (-2,-2) + (-3,-3)+ 
 (\ell-1)\beta\\
 & = & \deg_x(g_3) \cdot (-2,-2) + (-3,-3)+ 
 (\ell-1) (0,1)\\
 & = &
(-2 \deg_x(g_3) - 3, -2 \deg_x(g_3) - 3 + \ell -1)\\
& < & (0,0).
\end{eqnarray*}
By (\ref{prop:lexpconstruction1}), it follows that  $\lexp(f) < (0,0)$.

\medskip

\noindent
{\bf Case 2:} \ If $\lt (f) = \lambda x^k y^{2\ell}$ with $\lambda \in K^*$,  $k, \ell > 0$,  then we can write $f = py^{2 \ell}$ where
$p \in K[x]$ such that $\deg_x p = k$. By (\ref{eq:w-ell}),  the $w$-expansion of $f$ is $$f= p  w^\ell +  \sum_{i=0}^{\ell-1}   (  f_{i,0} + f_{i,1}y ) w^i,$$
and so by  (\ref{prop:lexpconstruction1}), we have 
$$\lexp(f)  \le   \lexp(p) + \ell \lexp(w) = k(-2,-2) + \ell (0,1) =  (-2k,-2k+\ell).$$
Since $k \ge 1$, it follows that
$$\lexp(f)  \le    (-2,\ell-2)< (0,0).$$

\medskip

\noindent
{\bf Case 3:} \ If $\lt (f) = \lambda x^k y^{2\ell+1}$ with $\lambda \in K^*$,   $k, \ell \ge 0$, then we can write $f = py^{2 \ell +1}$ where
$p \in K[x]$ such that $\deg_x p = k$.
By (\ref{eq:w-ell}),  the $w$-expansion of $f$ is $$f=  \sum_{i=0}^{\ell}   (  f_{i,0} + f_{i,1}y ) w^i,$$
where $f_{\ell,1} = p$,
and so by (\ref{prop:lexpconstruction1}), we have 
$$\lexp(f)  \le   \lexp(p) + \lexp(y) + \ell \lexp(w) = k (-2,-2) + (-3,-3) + \ell (0,1) = (-2k-3,-2k-3+\ell) \le (0,0).
$$

\medskip

\noindent
{\bf Case 4:} \ Suppose
$\lt (f) = \lambda x^{k}$ with $\lambda \in K^*$,  $k > 0$. 
Since we are using the lexicographical order with $y>x$, it follows that $f \in K[x]$, and so $\lexp(f) \le (0,0).$

We have seen in all four cases that $\lexp(f) \le (0,0)$, and so $\lexp(K[x,y]^*)$ is nonpositive. Next, we demonstrate that $\lexp(K[x,y]^*)$ is not reversely well ordered.
We will inductively define $f_d \in K[x,y]^*$ with $d \in \Z_{\ge 0}$ such that 
\begin{equation}\label{degf}
\deg_y f_d = 2(d+1),
\end{equation} and
\begin{equation}\label{expf}
\lexp(f_d)= (-1,d-1),
\end{equation}
 from which it follows that $\lexp(K[x,y]^*)$ is not reversely well ordered.
Defining $f_0 =  y^2+x^3$, it follows from (\ref{y2x3}) that $\lexp(f_0) = (-1,-1)$.
Given $f_0, f_1, \dots, f_d$, we will show how to construct $f_{d+1}$.

By  Proposition \ref{choosevalue}, there exists $f \in K[x,y]$ such that 
$\deg_y f = 2(d+1)$ and $\lexp (f) \ge (-1,-1) = \lexp(f_0)$.
By Lemma \ref{immsucclemma}, there exist $\lambda_0, \dots, \lambda_d \in K$
such that $$\lexp\left( f + \sum_{i=0}^d \lambda_i f_i \right) > \lexp(f_d).$$
If we define $f_{d+1} = f + \sum_{i=0}^d \lambda_i f_i$, 
then since $\deg_y f = 2(d+1)$ and $\deg_y f_i = 2i$, we can conclude (\ref{degf}).
Moreover,
$$\lexp(f_{d+1}) > \lexp(f_{d} ) =(-1,d-1) ,$$
from which it follows 
\begin{equation}
\lexp(f_{d+1}) \ge (-1,d).
\end{equation} 
Since we are working over $(\Z \oplus \Z, \mbox{lex})$ and  $\lexp(K[x,y]^*)$ is nonpositive, it follows from (\ref{lexpf}) that 
\begin{equation}\label{fded}
\lexp(f_{d+1}) = (-1,e_d)
\end{equation} for some integer $e_d  \ge d$.
 To justify (\ref{expf}), we have only left to show that $e_d \le d$.

We begin with the observation that
$$\lexp(xy^2+y+x^4) = \lexp(xw) = (-2,-2) + (0,1) = (-2,-1).$$
For each $q \in \Z_{\ge 0}$, $r \in \{0,1\}$,  we define $h_{2q+r}  \in K[x,y]$  by
$$h_{2q+r} = y^r(xy^2+y+x^4)^q,$$
in which case 
\begin{equation}\label{h2qr}
\lexp(h_{2q+r}) = r(-3,-3) + q (-2,-1).
\end{equation}
Note that for $i\neq j$, $\overline{\lexp(h_i)} \neq \overline{\lexp(h_j)}$, and since $\deg_yh_i = i$, we have by Lemma \ref{lemma:gradual-alt},
$$\lexp(\V_{2(d+1)}^*)/\lexp(\V_0^*) = \{\overline{h_0}, \overline{h_1}, \dots, \overline{h_{2(d+1)}}\}.$$
Since  $\deg_y{f_{d+1}}  = 2(d+1)$, it follows that
$\overline{\lexp(f_{d+1})} = \overline{\lexp(h_{2q+r})}$ for some nonnegative index $2q+r \le 2(d+1)$.
Since $\lexp(x) = (-2,-2)$, it follows that $\lexp(V_0^*) = (-2,-2) \Z_{\ge 0}$,
and so for some $t \in \Z$,
$$
\lexp(f_{d+1}) = \lexp(h_{2q+r}) +t (-2,-2).
$$
Combining this fact with (\ref{fded}) and (\ref{h2qr}), we have
$$(-1,e_d) =r(-3,-3) + q(-2,-1) +  t(-2,-2).$$
Subtracting the first component from the second component simultaneously on the left- and right-hand side of this equation, we find that $$e_d +1 =  (-3r+3r) + (-q+2q)+  (-2t+2t),$$ and so $e_d+1 = q$.
However,  $2q+r \le 2(d+1)$, and so $e_d+1 = q \le d+1 - \frac{r}{2}$, which implies
that $-e_d \ge - d + \frac{r}{2}$. Since $r \ge 0$, it follows that 
$e_d \le d$, as desired.

Now that we have demonstrated that $\lexp(K[x,y]^*)$ is reversely well ordered, we will precisely describe this set. Considering (\ref{lexpf}) in conjuction with the fact that $\lexp(K[x,y]^*)$ is nonpositive yields
$$\lexp(K[x,y]^*) \subset \left( \Z_{> 0} (-1,-1) + \Z_{\ge 0}(0,1)\right) \cup  \{ (0,0)\}.$$
Noting that $(0,0) \in \lexp(K[x,y])$, we justify the reverse inclusion by taking an arbitrary nonzero ordered pair of the form 
$i(-1,-1) + j(0,1) = (-i,j-i),$
with $i>0, j \ge0$ and seeing that it can be written
$$\lexp(f_j (y^2+x^3)^{i-1})  = (-1,j-1)+ (i-1)(-1,-1) = (-i,j-i),$$
by using (\ref{y2x3}) and (\ref{expf}) together.\end{proof}


\begin{thebibliography}{}







\bibitem[GePe]{gepe}
Geil, O., Pellikann, R. (2002). On the Structure of Order Domains.
{\em Finite Fields and Their Applications}, {\bf 8}, 369--396.

\bibitem[Ha]{hahn}
Hahn, H. (1907). \"Uber die nichtarchimedischen
Gr\"o{\ss}ensysteme. {\em Sitz. Akad. Wiss. Wien}, {\bf 116},
601--655.

\bibitem[Ke1]{ked1}
Kedlaya, K. S. (2001). The Algebraic Closure of the Power Series in
Positive Characteristic. {\em Proceedings of the American
Mathematical Society}, {\bf 129}, 3461--3470.


\bibitem[Ke2]{ked2}
Kedlaya, K. S. (2017). On the algebraicity of generalized power series.
 {\em Beitr\"age zur Algebra und Geometrie}, {\bf 58 (3)}, 499--527.
 

 
\bibitem[MaSc]{mac-schi}
MacLane, S., Schilling, O.F.G. (1939).  Zero-Dimensional Branches
on Algebraic Varieties. {\em Annals of Mathematics}, {\bf 40},
507--520.



\bibitem[Mo1]{mo1}
Mosteig, E. (2002).  Computing Leading Exponents of Noetherian
Power Series. {\em Communications in Algebra}, {\bf 30},
6055--6069.

\bibitem[Mo2]{mo2}
Mosteig, E. (2008).  Value Monoids of Zero-Dimensional Valuations of Rank One.
{\em Journal of Symbolic
Computation}, {\bf 43 (10)}, 688--725.




\bibitem[MoSw1]{ms1}
Mosteig, E., Sweedler, M. (2002). Valuations and Filtrations. {\em
Journal of Symbolic Computation}, {\bf 34 (5)}, 399--435.

\bibitem[MoSw2]{ms2}
Mosteig, E., Sweedler, M. (2004). The Growth of Valuations on
Rational Function Fields. {\em Proceedings of the AMS}, {\bf 132},
3473--3483.

\bibitem[OS]{os}
O'Sullivan, M. E. (2001) New codes for the Berlekamp-Massey-Sakata algorithm. {\em Finite Fields and Their Applications}, {\bf 7}, 293--317.

\bibitem[Sw]{sweedler}
Sweedler, M. (1986).  Ideal Bases and Valuation Rings. Manuscript,
 available at \texttt{math.usask.ca/fvk/Valth.html}.


\bibitem[Za]{zariski}
Zariski, O. (1939).   The reduction of the singularities of an
algebraic surface. {\em Annals of Mathematics}, {\bf 40},
639--689.

\bibitem[ZaSa]{zariski-samuel}
Zariski, O., Samuel, P. (1958).   {\em Commutative Algebra, Vol. 1},
Springer-Verlag.


\end{thebibliography}
\end{document}